\documentclass[11pt,letterpaper]{amsart}

\usepackage{amsmath,amsfonts,amssymb,amsthm,url,array}
\usepackage{amsthm}
\usepackage{mathrsfs}
\usepackage{url}
\usepackage{color}

\usepackage{hyperref}
% Some other packages
%\usepackage{bm}
%\usepackage{bbold}
%\usepackage{mathtools}
\usepackage[T1]{fontenc}

%% Sets page size and margins
\usepackage[a4paper,top=2cm,bottom=2cm,left=2cm,right=2cm,marginparwidth=1.75cm]{geometry}

%% Useful packages
\usepackage{graphicx}

\frenchspacing

\newtheorem{thm}{Theorem}[section]

\newtheorem{cor}[thm]{Corollary}
\newtheorem{lem}[thm]{Lemma}
\newtheorem{rmk}[thm]{Remark}

\numberwithin{equation}{section}

\newcommand{\Z}{{\mathbb{Z}}}
\newcommand{\R}{{\mathbb{R}}}
\newcommand{\Q}{{\mathbb{Q}}}
\newcommand{\co}{{\mathcal{O}}}
\newcommand{\Tr}{\mathrm{Tr}}
\newcommand{\Nr}{\mathrm{N}}

\newcommand{\eq}{\equiv}
\newcommand{\rom}[1]{\uppercase\expandafter{\romannumeral #1\relax}}

\newcommand{\Vol}{{\rm Vol}}

% Some other commands

\allowdisplaybreaks

\title[Real quadratic fields with a universal form]{Real quadratic fields with a universal quadratic form \\ of given rank have density zero}

\author{V\' \i t\v ezslav Kala}
\address{Charles University, Faculty of Mathematics and Physics, Department of Algebra, Sokolov\-sk\' a 83, 18600 Praha~8, Czech Republic}
\email{{vitezslav.kala@matfyz.cuni.cz}}

\author{Pavlo Yatsyna}
\address{Charles University, Faculty of Mathematics and Physics, Department of Algebra, Sokolov\-sk\' a 83, 18600 Praha~8, Czech Republic}
\email{{p.yatsyna@matfyz.cuni.cz}}

\author{B\l{}a\.zej \.Zmija}
\address{Charles University, Faculty of Mathematics and Physics, Department of Algebra, Sokolov\-sk\' a 83, 18600 Praha~8, Czech Republic}
\address{Institute of Mathematics of the Polish Academy of Sciences, \'{S}niadeckich 8, 00-656 Warsaw, Poland}
\email{blazej.zmija@matfyz.cuni.cz}

\thanks{V.K. and B.\.Z. were supported by Czech Science Foundation (GA\v CR) grant 21-00420M. P.Y. was supported by Research Council of Finland (grant 351271, PI C. Hollanti) and by  Charles University programmes PRIMUS/24/SCI/010 and UNCE/24/SCI/022}
\thanks{To appear in the American Journal of Mathematics}
\keywords{universal quadratic form, quadratic lattice, real quadratic number field, continued fraction}
\subjclass[2020]{11A55, 11E12, 11E20, 11H55, 11R11, 11R80}

\begin{document}

\begin{abstract} 
	We prove an explicit upper bound on the number of real quadratic fields that admit a universal quadratic form of a given rank, thus establishing a density zero statement. More generally, we obtain such a result for totally positive definite quadratic lattices that represent all the multiples of a given rational integer.
	Our main tools are short vectors in quadratic lattices combined with an estimate for the number of periodic continued fractions with bounded coefficients.
\end{abstract}

\maketitle

\section{Introduction}

Among the most widely recognized results in number theory are Fermat's two-square, Legendre's three-square, and Lagrange's four-square theorems. From Gauss to Minkowski, Hilbert, Ramanujan, and Siegel, the study of integers represented by the sum of squares and other quadratic forms has made tremendous advances. 
The most recent breakthroughs occurred in the form of the so-called ``number theorems'' by Conway--Schneeberger \cite{Co}, Bhargava--Hanke \cite{BH}, and Rouse \cite{Ro}, which gave sufficient and necessary conditions for a positive definite quadratic form to represent all positive integers or other special infinite sets. 

Similar questions are quite difficult for quadratic forms over the rings of integers $\co_H$ of number fields $H$. When the form is indefinite (or when there is at least one complex, non-real embedding of $H$), then a lot of information is given by a local-global principle \cite{HHX, HSX, XZ}. The situation is markedly different for positive definite quadratic forms over totally real number fields. While there is an ``asymptotic local-global principle'' \cite{HKK} that holds for all elements of sufficiently large norm (provided that the quadratic form has rank at least five), determining the representability of small elements is very hard, and so is studying universal forms, i.e., those that represent all the totally positive integers.

First, note that already Siegel \cite{Si2} showed that the sum of any number of squares is universal only over $\Q$ and $\Q(\sqrt 5)$ (where three squares suffice \cite{Maa}), and thus one needs to consider more general forms.

To be more precise, let us talk about totally positive definite \textit{quadratic $\co_H$-lattices} $(\Lambda,Q)$ over a totally real number field $H$, i.e., finitely generated $\co_H$-modules $\Lambda$ equipped with a quadratic form $Q$ such that all the values $Q(v)$ for $0\neq v\in \Lambda$ are totally positive elements of $\co_H$. Such a lattice is \textit{universal} if it represents all the totally positive integers; we will also often assume that $\Lambda$ is \textit{classical} in the sense that all the values of the associated bilinear form lie in $\co_H$
(for more precise definitions and further information, see the beginning of Section \ref{sec: large ranks}). As a universal classical lattice exists over every $H$, we can denote by $R_{class}(H)$ its smallest possible rank over $H$ (and by $R(H)$ the smallest rank without the classical assumption).

In contrast with quadratic forms over rational integers $\Z$, there exist universal quadratic lattices of rank three over some real quadratic fields -- they were all characterized first in the classical case \cite{CKR}, and then in general \cite{KK+}. However, Kitaoka formulated 
the influential conjecture that there are only finitely many number fields that admit a ternary universal lattice; in fact, 
only two fields in higher degrees are known [Krásenský--Scharlau, in preparation]. Despite some progress \cite{EK, KY2}, the conjecture remains open.
Over real quadratic fields $H=\Q(\sqrt D)$, even more is known. Namely, only finitely many of them have $R(\Q(\sqrt D))\leq 7$ \cite{KKP}, but there is a universal form of rank 8 whenever $D={n^2-1}$ is squarefree \cite{Ki}. 

The works of Blomer and Kala \cite{BK,BK2,Ka} brought a breakthrough in the area by establishing that the ranks $R(\Q(\sqrt D))$ can be arbitrarily large. Their results were based on the observation that it is hard for a quadratic lattice to represent certain \textit{indecomposable} elements, and so their number can be used to obtain lower bounds for $R(\Q(\sqrt D))$.
These indecomposables are in turn connected to the continued fraction of $\sqrt D$ \cite{DS}, and so by controlling the continued fraction, one can get large ranks of universal forms. However, these arguments were rather delicate, presenting a major limitation to their applicability. In particular, they yielded only very sparse sets of $D$ with large $R(\Q(\sqrt D))$; some of the obstacles were closely related to the problem of existence of real quadratic fields with small class number (for if the class number is large, then it is hard to argue that there exist suitable elements of small norm).

\medskip

The current paper overcomes these difficulties and shows that, in fact, the ranks $R(\Q(\sqrt D))$ are typically large.

\begin{thm}\label{th1.1}
	Let $\varepsilon >0$.
	For almost all squarefree $D>0$, we have that
	$$R_{class}(\Q(\sqrt D))\geq D^{\frac{1}{12} -\varepsilon}\text{\ \ \ and\ \ \ }R(\Q(\sqrt D))\geq D^{\frac{1}{24} -\varepsilon}.$$
\end{thm}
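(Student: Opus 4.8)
The plan is to combine two ingredients, as the abstract advertises: a lower bound for the rank of a universal lattice in terms of how many ``hard to represent'' elements of small norm the field has, and an upper bound on the number of real quadratic fields $\Q(\sqrt D)$ with $D\le X$ whose continued fraction of $\sqrt D$ (or of $\frac{1+\sqrt D}{2}$) has all partial quotients bounded by a fixed constant. First I would recall from the Blomer--Kala circle of ideas that if a totally positive definite classical $\co_H$-lattice $(\Lambda,Q)$ of rank $n$ is universal, then it must represent all the indecomposable totally positive integers of $\co_H$; by counting short vectors (Cauchy--Schwarz / the pigeonhole on coordinates, exactly as in the ``short vectors in quadratic lattices'' tool mentioned in the abstract), any fixed lattice of rank $n$ can represent only $O(n^{C})$ elements whose norm is below a suitable threshold depending on the geometry of the embedding of $\co_H$. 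Hence, writing $\alpha(D)$ for the number of indecomposables of $\Q(\sqrt D)$ of norm up to roughly $\sqrt D$, one gets a bound of the shape $R_{class}(\Q(\sqrt D))\gg \alpha(D)^{1/c}$ for an absolute constant $c$, and similarly (with a worse exponent, reflecting the factor $2$ difference $\tfrac1{12}$ vs.\ $\tfrac1{24}$) for $R(\Q(\sqrt D))$ without the classical hypothesis.

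Next I would invoke the classical description, due to Dress--Scharlau and used by Kala, of the indecomposable integers of $\Q(\sqrt D)$ in terms of the periodic continued fraction expansion $\sqrt D = [a_0,\overline{a_1,\dots,a_s}]$ (or the analogue for $\frac{1+\sqrt D}2$): the convergents and their ``intermediate'' combinations $p_{i-1}+t\,p_i$, $q_{i-1}+t\,q_i$ for $0\le t\le a_{i+1}$ give, up to units and conjugation, exactly the indecomposables, and their norms are controlled by the $a_i$'s and by $s$. Summing, one finds that the number of indecomposables of bounded norm is essentially $\sum_i a_{i+1}$, so it is \emph{small} only when the period is short \emph{and} all partial quotients are small. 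Concretely, if $R_{class}(\Q(\sqrt D)) < D^{1/12-\varepsilon}$ then $\sum_i a_{i+1} \ll D^{c(1/12-\varepsilon)}$, which forces the continued fraction of $\sqrt D$ to have length $O(D^{\delta})$ and all partial quotients $O(D^{\delta})$ for a $\delta$ that we can take as small as we like by choosing constants appropriately; in particular all $a_i$ are bounded by some fixed $M$ once we restrict to a positive-density subfamily, after a dyadic decomposition over the possible sizes of the $a_i$.

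The final step is the counting estimate, which is the heart of the matter and which I expect to be the main obstacle: bounding the number of squarefree $D\le X$ for which the (purely periodic, up to the first term) continued fraction of $\sqrt D$ has period length $\le L$ and all partial quotients $\le M$. The point is that such a continued fraction is determined by the finite word $(a_1,\dots,a_s)$, of which there are at most $M^L$, and each such word determines the quadratic irrational, hence $D$, up to boundedly many choices (via the standard correspondence between purely periodic continued fractions and reduced quadratic irrationals, i.e.\ roots of $qx^2+(q'-p)x-p'=0$ built from the convergents of the period). Thus the number of admissible $D\le X$ is $\ll M^{L}\cdot(\text{poly factors})$, which is $\ll X^{o(1)}$, or more carefully $\ll X^{\eta}$ for a small $\eta$ depending on how $L,M$ were chosen in terms of the exponent $1/12-\varepsilon$; balancing $\eta<1$ against the constraints is where one has to be careful, and it is precisely the improvement of this continued-fraction count (over the delicate arguments of Blomer--Kala that were tied to class number questions) that makes the density-zero — indeed, power-saving — conclusion go through. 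Assembling: the set of squarefree $D\le X$ with $R_{class}(\Q(\sqrt D)) < D^{1/12-\varepsilon}$ is contained, after the dyadic decomposition, in a bounded union of sets each of size $\ll X^{\eta}$ with $\eta<1$, hence has density zero; the same argument with the weaker short-vector bound gives the $R(\Q(\sqrt D))\ge D^{1/24-\varepsilon}$ statement.
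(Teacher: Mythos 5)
Your outline correctly identifies the two ingredients (short vectors bounding the rank from below, and a continued-fraction count giving density zero), and correctly explains the factor-of-two passage from the non-classical to the classical case via $2Q$. But both of the substantive steps diverge from the paper in a way that introduces genuine gaps.

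First, the lower bound for the rank. You assert that $R_{class}(\Q(\sqrt D))<D^{1/12-\varepsilon}$ forces the period of $\sqrt D$ to be short and all partial quotients small, via a bound of the type $\sum_i a_{i+1}\ll R^{c}$. This is not what the short-vector argument gives, and it is not what the paper proves. The obstruction is that the indecomposables belonging to different continued-fraction blocks cannot be separated by a single trace form: following Kala--Tinkov\'a, one fixes an index $i$, uses the semiconvergents $B_r=\alpha_{2i-1}+r\alpha_{2i}$ for $0\le r\le u_{2i+1}$, and a $\delta\in\co_H^{\vee,+}$ with $\Tr(\delta B_r)=1$ for \emph{that} $i$. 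Universality then gives $u_{2i+1}+1$ lattice vectors of a fixed small norm in the twisted $\Z$-lattice $(\Z^{2R},\Tr(\delta Q(\cdot)))$, and the explicit short-vector count (Theorem \ref{thm:short vectors}) yields $u_{2i+1}<B(R,m)$. There is a different $\delta$ for each $i$, so one only controls $\max_i u_{2i+1}$, not $\sum_i a_{i+1}$, and in particular one obtains no bound whatsoever on the period length $s$. The period length of $\sqrt D$ is typically of order $\sqrt D(\log D)^{-1}$, so it is simply false that a small-rank universal lattice forces a short period.

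Second, the counting step. Because the period is not controlled, the combinatorial count ``at most $M^L$ words of length $L$, each giving $O(1)$ values of $D$'' does not apply: with $L$ as large as $\sqrt X$ this bound is vacuous. The paper's Theorem \ref{ThmBound} and Corollary \ref{CorBound} instead count
\begin{align*}
\#\{D\le X : \xi_D\bmod 1=[0;u_1,u_2,\dots],\ u_{2n-1}\le B\text{ for all }n\}
\end{align*}
with no restriction on the period. The mechanism is metric, not combinatorial: one shows (Steps I--II, \`a la Khinchin) that the set $F_\varphi\subset[0,1)$ of reals whose odd-indexed partial quotients are all $\le B$ has Lebesgue measure $<\prod_{j\le n}(1-\tfrac{1}{3(B+2)})+O(n/L)$ and can be covered by a controlled number of intervals, and then (Step III) transfers this to a count over $D\le X$ via a quantitative equidistribution estimate for $\sqrt D\bmod 1$ and $\tfrac{1+\sqrt D}{2}\bmod 1$, using the Erd\H{o}s--Tur\'an inequality and van der Corput type bounds on $\sum_{D\le X}e^{2\pi ik\sqrt D}$. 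Balancing $n$ and $L$ gives the explicit saving $X^{7/8}(\log X)^{3/2}$. Your approach would need to either prove a short-period statement that is not true, or replace the word count with an equidistribution argument; as written, it does not close.
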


By ``almost all'' we mean that such $D$ have (natural) density 1 among the set of all squarefree $D>0$. 

\medskip

In fact, we work in the more general setting of representing all multiples of a fixed positive rational integer~$m$. We say that a  quadratic lattice $(\Lambda,Q)$ over a totally real number field $H$ is \textit{$m\co_H$-universal} if $Q$ represents all the elements of 
$m\co_H^{+}$, i.e., all the totally positive multiples of $m$.

Not only do  $m\co_H $-universal lattices present a natural generalization, they also provide  an elegant way of dealing with non-classical universal lattices $(\Lambda,Q)$, for then $(\Lambda,2Q)$ is \textit{classical} and $2\co_H$-universal -- and it is usually more convenient to work under the classical assumption, as we do in this paper.

As our main result, we thus show an explicit upper bound on the number of real quadratic fields that admit an 
$m\co_H $-universal lattice of given rank $R$.

\begin{thm}\label{MAIN}
	Let $R,m,X$ be positive integers. Denote
	\begin{align*}
	&\mathcal{D}(R,m,X):=\\
	&\ \#\left\{ \text{squarefree }  D\leq X \mid \exists \text{ $m\co_{\Q(\sqrt{D})}$-universal classical lattice of rank } R  \right\}.
	\end{align*}
	Then for all sufficiently large $X$ we have
	\begin{align*}
		\mathcal{D}(R,m,X)< A(R,m)\cdot X^{7/8} (\log X)^{3/2}
	\end{align*}
	for some constant $A(R,m)$ that is given explicitly in Theorem $\ref{th:main}$.
\end{thm}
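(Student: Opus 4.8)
The plan is to combine two independent ingredients: (i) a lower bound, coming from the geometry of numbers, on how large an $m\co_H$-universal classical lattice of rank $R$ must be — more precisely, that its existence forces the continued fraction expansion of $\sqrt D$ (or of $\frac{1+\sqrt D}{2}$) to have all partial quotients bounded by some $B=B(R,m)$; and (ii) a counting estimate for the number of $D\le X$ whose continued fraction has this boundedness property. The key link between universal forms and continued fractions is the classical fact (Dani--Shokranian, and the Blomer--Kala circle of ideas) that indecomposable totally positive integers of $\Q(\sqrt D)$ are, up to units and conjugation, the convergents $p_i+q_i\omega$ arising from the periodic continued fraction, and that their norms grow with the partial quotients. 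A lattice of rank $R$ can represent only ``few'' elements of small norm, so if there were a partial quotient that is too large, one of the corresponding indecomposables would have norm too small to be represented, contradicting $m\co_H$-universality (one has to represent the indecomposable times $m$, or argue with $m$-multiples directly). This should give: if an $m\co_H$-universal classical lattice of rank $R$ exists, then every partial quotient $u_i$ in the periodic part of the continued fraction of $\sqrt D$ satisfies $u_i\le B(R,m)$.

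Next I would carry out the counting step. The number of $D\le X$ for which $\sqrt D$ (equivalently $\omega_D$) has all partial quotients at most $B$ is controlled as follows. A real quadratic irrational with purely periodic (or eventually periodic) continued fraction with partial quotients in $\{1,\dots,B\}$ and period length $\ell$ is determined by the finite word $(u_0,u_1,\dots,u_{\ell-1})$, of which there are at most $B^\ell$. On the other hand, the continued fraction period of $\sqrt D$ has length $\ell = O(\sqrt D \log D)$ in general, but if all partial quotients are bounded by $B$, then the fundamental solution of the Pell equation — whose size is essentially $\prod (\text{something like } u_i) $ up to the usual continuant estimates — is at least of exponential size $c(B)^\ell$, while it is also at most polynomial in $D$ (indeed the regulator is $O(\sqrt D\log D)$ only in the worst case, but the fundamental unit $\varepsilon_D$ satisfies $\varepsilon_D \le D^{O(1)}$ is false in general; rather one uses $\varepsilon_D \gg \sqrt D$ and, crucially, the continuant $\ge F_\ell \gg \phi^\ell$ bound gives $\ell \ll \log \varepsilon_D$). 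Combining, $c(B)^{\ell}\le \varepsilon_D$, and conversely each such $D$ with period $\ell$ contributes $\varepsilon_D \asymp$ continuant, so summing $\sum_{\ell} B^\ell$ over the admissible range $\ell \le \ell_0(X)$ of period lengths that can occur for $D \le X$. The point is that the period length is forced to be short, $\ell = O(\log X)$, once the partial quotients are bounded, and then a direct count of words gives a power-saving bound. Calibrating the exponents carefully yields the stated $X^{7/8}(\log X)^{3/2}$; the powers $7/8$ and $3/2$ come from optimizing the trade-off between the number of admissible continued-fraction words of a given period length and the number of $D \le X$ realized by each word (each periodic word of period $\ell$ gives a fundamental unit, hence a single $D$ only if that word actually equals the expansion of $\sqrt D$, but one must allow pre-periods and the ambiguity $D$ versus $D$ times a square, contributing the extra logarithmic factors).

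The precise bookkeeping I would follow: (1) recall from the ``large ranks'' section the definition of classical $m\co_H$-universal lattices and the short-vector input (a rank-$R$ positive definite lattice has at most $C(R)$ vectors of norm below any fixed threshold, and its minimum is controlled); (2) translate $m\co_H$-universality into: for every indecomposable $\alpha \succ 0$, the lattice represents $m\alpha$, hence $m\alpha$ is a sum/combination bounded in terms of $R$, which bounds $\Tr(\alpha)/\sqrt D$ or equivalently bounds the relevant partial quotients by $B(R,m)$; (3) set up the continued fraction of $\omega_D = \sqrt D$ or $\frac{1+\sqrt D}{2}$ according to $D \bmod 4$, with period $(u_1,\dots,u_s)$, and record the continuant identity expressing the fundamental unit and $D$ itself in terms of the $u_i$; (4) show period length $s \le \kappa(B)\log X$ for the relevant $D\le X$; (5) count: the number of admissible $D\le X$ is at most $\sum_{s\le \kappa(B)\log X} (\text{palindromic/symmetric words of length } s \text{ over } \{1,\dots,B\}) \times (\text{multiplicity})$, and bound the word count by $B^{s/2+O(1)}$ using the palindrome structure of $\sqrt D$'s period, which is what pushes the exponent below $1$; (6) assemble into $A(R,m) X^{7/8}(\log X)^{3/2}$ and note $\mathcal D(R,m,X)\le $ this. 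The main obstacle I expect is step (2): making the implication ``universal lattice of rank $R$ $\Rightarrow$ bounded partial quotients'' both correct and effective, i.e., producing an explicit $B(R,m)$. This requires care because representing $m\alpha$ rather than $\alpha$ loosens the constraint, and because indecomposables can be represented in subtle ways by higher-rank lattices; one needs a clean quantitative statement that a totally positive integer $\beta$ with $\Nr(\beta)$ small but $\Tr(\beta)$ large relative to $\sqrt D$ cannot be represented by a rank-$R$ lattice unless $R$ is large — essentially a pigeonhole/short-vectors argument on $\Lambda$ — and then apply it to $\beta = m\cdot(p_i + q_i\omega_D)$ for the convergents with large partial quotient. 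Everything else is standard continuant estimates and a geometric-series word count.
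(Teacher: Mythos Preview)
Your two-step strategy (existence of an $m\co_H$-universal lattice of rank $R$ $\Rightarrow$ bounded partial quotients $\Rightarrow$ count such $D$) matches the paper's architecture, and your step (i) is close to what is actually done: the paper uses the semiconvergents $\alpha_{2i-1}+r\alpha_{2i}$, the trace form $q(v)=\Tr(\delta Q(v))$ to pass to a classical positive definite $\Z$-lattice of rank $2R$, and a short-vector count to conclude that the largest \emph{odd-indexed} coefficient $u=\max_i u_{2i+1}$ is below an explicit $B(R,m)$. Note that only odd-indexed coefficients are controlled; the even-indexed ones (in particular $u_s=2u_0\approx 2\sqrt D$) are not and cannot be bounded independently of $D$.

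Your step (ii), however, has a genuine gap. The assertion in your step (4), that bounded partial quotients force the period length to be $O_B(\log X)$, is false. Bounded partial quotients give $\varepsilon_D\asymp c(B)^s$, so $s\asymp_B \log\varepsilon_D$; but the regulator $\log\varepsilon_D$ is not $O(\log D)$ in general---it can be as large as $\sqrt D\log D$, and in fact bounded partial quotients are exactly the regime where one expects long periods. Thus $s$ ranges up to order $\sqrt X\log X$, and the word count $\sum_{s}B^{s/2}$ (even exploiting the palindrome symmetry) is astronomically larger than any power of $X$. Moreover, since only the odd-indexed $u_{2i+1}$ are bounded while the even-indexed ones are unconstrained, there is no finite alphabet to count words over in the first place. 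So the combinatorial route you sketch cannot produce a power-saving bound, and certainly not the specific exponent $7/8$.

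The paper's counting argument is of an entirely different nature. It does not enumerate periods at all; instead it works measure-theoretically. The set $F_\varphi\subset[0,1)$ of reals whose odd-indexed continued fraction coefficients are all $\le B$ is shown, via Khinchin-type interval estimates, to be contained in a finite union of intervals of small total measure. One then proves a quantitative equidistribution statement for the sequence $(\xi_D\bmod 1)_D$ using the Erd\H os--Tur\'an inequality and van der Corput-type bounds on $\sum_D e^{2\pi i k\sqrt D}$. The exponent $7/8$ and the $(\log X)^{3/2}$ arise from optimizing the number of intervals against the equidistribution error $O(X^{3/4})$ per interval, not from any combinatorics of periodic words. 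This is the missing idea in your proposal.
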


More precise versions of Theorems \ref{th1.1} and \ref{MAIN} are proved in the last section as Corollary \ref{cor:main} and Theorem~\ref{th:main}.

\medskip

Let us describe the idea of the proof and at the same time present the structure of the paper. Kala and Tinkov\' a \cite{KT} proved that the ranks of universal forms over $\Q (\sqrt{D})$ are closely related to the size of the largest (odd indexed) coefficient in the continued fraction expansion of $\xi_{D}$, the generator of the ring of integers of $\Q (\sqrt{D})$. Therefore, in Section \ref{sec:cont frac} we review the necessary preliminaries concerning continued fractions and then focus on studying the number of integers $D\leq X$ such that all the continued fraction coefficients (at odd indices) of $\xi_{D}$ are bounded. In fact, our method applies to a more general setting, and so in Theorem \ref{ThmBound} we study the set of the form
\begin{align*}
\#\big\{\ 1\leq D\leq X \ \big|\  f_{D}\bmod 1 = [0;u_{1},u_{2},\ldots ],\ u_{j}\leq\varphi (j) \textrm{ for all } j \ \big\},
\end{align*}
where $(f_{D})_{D=1}^{\infty}$ is an increasing sequence such that the difference sequence $(f_{D+1}-f_{D})_{D=1}^{\infty}$ is decreasing 
and $\varphi :\mathbb{N}\to \mathbb{N}\cup\{\infty\}$ satisfies $\sum_{n=1}^{\infty}\frac{1}{\varphi (n)}=\infty$. The specialization of this theorem to the case that we need is provided in Corollary \ref{CorBound}.

At the beginning of the proof of Theorem \ref{ThmBound} we switch our attention from sequences to subsets of the interval $[0,1)$, especially sets of all the real numbers within $[0,1)$ that have the first $n$ continued fraction coefficients fixed and equal to some prescribed numbers. This is done in Steps I and II of the proof and the main idea of this part comes from \cite[Chapter III]{Khinchin}. We then show in Step III a quantitative version of some equidistribution relation satisfied by the sequence $(f_{D})_{D=1}^{\infty}$. Here we make use of Erd\H{o}s--Tur\'{a}n inequality \cite[Corollary 1.1]{Montgomery} and some bounds on trigonometric sums. In step IV we use all the previously obtained results to conclude the proof.

Having Theorem \ref{ThmBound} proved, we want to find a bound for the largest odd indexed coefficient $u$ of $\xi_{D}$ for $D$ such that there exists an $m\co_{\Q(\sqrt{D})} $-universal lattice of rank $r$ over $\Q(\sqrt{D})$. For this purpose, we estimate the numbers $N(n)$ of vectors of norm $n$ in a quadratic $\Z$-lattice $(\Lambda,Q)$ with Gram matrix $G$ in Section \ref{sec:theta est}. 
In Theorem \ref{thm:short vectors} we thus obtain the following estimates:
\begin{align*}
N(1)\leq 2r;\ \ N(2)\leq \max\{480, 2r(r-1)\};\\
N(n)\leq \frac{\pi^{\frac{r}{2}}}{\Gamma\left(\frac{r}{2}+1\right)} \frac{n^{\frac{r}{2}}}{\sqrt{\det G}} + O(n^{\frac{r-1}{2}})\text{ for }n\geq 3.
\end{align*}
Such estimates are known \cite[Lemma 4.1]{Bl}, \cite[Theorem 20.9]{IwanKowal} (cf. also the very recent result of Regev and Stephens-Davidowitz \cite[Theorem 1.1]{RS}), but we make them fully explicit. 
Using Theorem \ref{thm:short vectors} and the ideas from \cite{KT}, we then easily obtain the required bound on $u$ in Theorem \ref{thm:large ranks}. This together with Theorem \ref{ThmBound} is enough to prove 
our main theorems as Theorem~\ref{th:main} and Corollary \ref{cor:main}, which we do at the end of the paper.

\medskip

Our results leave open the tantalizing question of the behavior of the ranks $R(H)$ for number fields $H$ of higher degree.
The few available results \cite{Ka2, KS, KT, Ti, Ya} suggest that the ranks are perhaps also typically large, but this is probably far out of reach, as the potential generalizations of continued fractions do not seem to be available for such an application -- and even Kitaoka's conjecture concerning $R(H)=3$ remains unproven.
One fascinating exception is the very recent work of Man \cite{Man} who used our results in his proofs of analogues of Theorems \ref{th1.1} and \ref{MAIN} for multiquadratic fields.

\section*{Acknowledgments}

We thank Miko\l aj Fr\k aczyk, Siu Hang Man, and Dayoon Park for interesting and helpful discussions about the paper.

\section{Continued fraction coefficients}\label{sec:cont frac}

At the beginning of this section let us recall some basic facts about continued fractions. Every irrational number $x$ can be expressed in the following way:
\begin{align*}
x=u_{0} + \cfrac{1}{u_{1} + \cfrac{1}{ u_{2} + \cfrac{1}{ u_{3} + \cfrac{1}{\ddots}}}} =: [u_{0};u_{1},u_{2},\ldots ],
\end{align*}
where $u_{0}:=\lfloor x\rfloor\in \mathbb{Z}$ and $u_{j}\in\mathbb{N}$ for $j\geq 1$ (we denote the set of positive integers by $\mathbb{N}$). The above expression is called {\it continued fraction expansion} and the numbers $u_{j}$ are the \textit{coefficients} of the expansion. A continued fraction is {\it periodic} if there are $N$ and $l$ such that $u_{n}=u_{n+l}$ for all $n\geq N$. In this situation we write
\begin{align*}
x = [u_{0};u_{1}\ldots ,u_{N-1},\overline{u_{N},\ldots ,u_{N+l-1}}]
\end{align*}
for simplicity. It is well known that a number $x$ has periodic continued fraction expansion if and only if $x$ is a quadratic irrational.

If $x=[u_{0};u_{1},u_{2},\ldots ]$, then for every positive integer $n$ we can consider the {\it convergent}
\begin{align*}
[u_{0};u_{1},u_{2},\ldots ,u_{n}] =: \frac{p_{n}}{q_{n}},
\end{align*}
where $p_{n}$ and $q_{n}$ are coprime positive integers. Moreover, they satisfy the following recurrence relations:
\begin{align*}
\left\{\begin{array}{ll}
p_{-2}=0, & \\
p_{-1}=1, & \\
p_{n}=u_{n}p_{n-1}+p_{n-2}, & n\geq 0,
\end{array}\right. 
\hspace{1cm}
\left\{\begin{array}{ll}
q_{-2}=1, & \\
q_{-1}=0, & \\
q_{n}=u_{n}q_{n-1}+q_{n-2}, & n\geq 0.
\end{array}\right.
\end{align*}
We also have that for every $n$,
\begin{align*}
q_{n}p_{n-1}-q_{n-1}p_{n}=(-1)^{n}.
\end{align*}

The proofs of the statements above can be found in any survey concerning continued fractions, for example in the book \cite{Khinchin}. We will use them throughout the paper without reference.

Let us reintroduce one more piece of notation. For a fixed squarefree $D\in \Z_{>1}$, we denote the generator of the ring of integers of $\Q (\sqrt{D})$ by
\begin{equation*}
\xi_D:=\begin{cases}\sqrt{D} & \text{when }D \eq 2,3 \pmod 4, \\
\frac{1+\sqrt{D}}{2} & \text{when }D \eq 1 \pmod 4. \end{cases}
\end{equation*}
In Section \ref{sec: large ranks} we show a connection between the continued fraction expansions of the numbers $\xi_{D}$ and the ranks of universal quadratic forms over $\Q(\sqrt{D})$.

Let us move on to the main part of this section. Our aim is to present a general theorem (Theorem \ref{ThmBound}) that provides an explicit upper bound for the numbers of terms (up to some $X$) of a given sequence that have bounded coefficients in their continued fraction expansion. Later, in Corollary \ref{CorBound}, we apply this to the case of the numbers $\xi_{D}$.

\medskip

In this paper we mainly focus on the case of the sequence $(\xi_{D})_{4\nmid D}$. Therefore, at first, let us narrow our considerations down to the case of square roots and present a rough heuristic argument explaining why we could expect for every fixed number $B\geq 1$ that the set of numbers $D$ such that all the coefficients $u_{2i+1}$ in the continued fraction expansions of $\sqrt{D}$ are bounded by $B$, has natural density~$0$.

Every set of the form
\begin{align*}
E\begin{pmatrix}
t_{1} & t_{2} & \ldots & t_{n} \\ k_{1} & k_{2} & \ldots & k_{n}
\end{pmatrix}
:= \{\ \alpha\in [0,1)\ |\ \alpha=[0;u_{1},u_{2},\ldots ],\ u_{t_{j}}=k_{j} \ \}
\end{align*}
is an interval or a countable sum of intervals with the endpoints in $\Q$ (see \cite[pp. 57--58]{Khinchin}; we will use a more precise description later, see Lemma \ref{LemProp1}  below). In particular, every set
\begin{align*}
E\begin{pmatrix}
1 & 2 & \ldots & n \\ k_{1} & k_{2} & \ldots & k_{n}
\end{pmatrix}
\end{align*}
is an interval that we call an interval of \textit{rank} $n$. One can check that for example
\begin{align*}
E\begin{pmatrix}
1 \\ k
\end{pmatrix}=\left[\frac{1}{k+1},\frac{1}{k}\right),
\end{align*}
see also \cite[Chapter III]{Khinchin}. If $D$ is such that $u_{1}\leq B$, then $\sqrt{D}-\lfloor\sqrt{D}\rfloor$ belongs to the union of the sets $E\begin{pmatrix}
1 \\ k
\end{pmatrix}$ for $1\leq k\leq B$, that is, to the interval $[1/(B+1),1)$. The probability of such an event should be therefore close to $1-1/(B+1)$. We could also expect that the probability that $u_{2i+1}\leq B$ is similar for every $i$, and so close to $1-1/(B+1)$, too. Moreover, it seems reasonable to suppose that such events are independent. Hence, the probability that $\sqrt{D}$ has all the coefficients $u_{2i+1}$ bounded by $B$ for $1\leq i\leq n$ should be close to
\begin{align*}
\left(1-\frac{1}{B+1}\right)^{n}
\end{align*}
which tends to $0$ as $n\to\infty$.

\medskip

The above heuristic argument can easily be generalized. We can expect that for a given function $\varphi :\mathbb{N}\to \mathbb{N}\cup\{\infty\}$, the more general set 
\begin{align*}
\big\{\ 1\leq D\leq X \ \big|\  \sqrt{D}-\lfloor\sqrt{D}\rfloor = [0;u_{1},u_{2},\ldots ],\ u_{n}\leq\varphi (n) \textrm{ for all } n \ \big\},
\end{align*}
has density zero if we only assume that $\varphi (n)\neq \infty$ for sufficiently many $n$. In fact, we shall prove a result that can be applied to a more general class of sequences than only $(\sqrt{D}-\lfloor\sqrt{D}\rfloor)_{D=1}^{\infty}$. This is stated more precisely in the following theorem.

In order to simplify the notation let us denote for every real number $r$ its fractional part as $r\bmod{1}$. That is, 
\begin{align*}
r\bmod{1} := r-\lfloor r\rfloor\in [0,1).
\end{align*} 

\begin{thm}\label{ThmBound}
Let $(f_{D})_{D=1}^{\infty}$ be an increasing sequence such that the sequence $(f_{D+1}-f_{D})_{D=1}^{\infty}$ is decreasing. 

Let $\varphi :\mathbb{N}\to \mathbb{N}\cup\{\infty\}$ be such that $\varphi (1)\neq\infty$ and $\sum_{n=1}^{\infty}\frac{1}{\varphi (n)}=\infty$. 

Let $(a_{n})_{n=1}^{\infty}$ be an increasing sequence of positive integers such that $\varphi (a)\neq\infty$ if and only if there exists $n$ satisfying $a=a_{n}$. 

Then for every $X$, $L$, $n>1$, the quantity
\begin{align*}
\#\big\{\ 1\leq D\leq X \ \big|\  f_{D}\bmod 1 = [0;u_{1},u_{2},\ldots ],\ u_{j}\leq\varphi (j) \textrm{ for all } j \ \big\}
\end{align*}
is bounded from above by
\begin{align*}
X & \left[\prod_{j=1}^{n}\left(1-\frac{1}{3(\varphi (j)+2)}\right) +\frac{2(n-1)}{L}\right]+ \\
 & \hspace{0.5cm} 
 + \left((3\pi+1)X^{1/2}f_{X+1}^{1/2}+\frac{\pi}{2}\cdot \frac{1}{f_{X+2}-f_{X+1}}\right) \\
 & \hspace{0.5cm} \cdot \left[ \sum_{j=1}^{n}(n-j+1)\varphi (a_{j}) + \left(\sum_{j=1}^{n}a_{j}-\frac{n(n+1)}{2}\right)L\right].
\end{align*}
\end{thm}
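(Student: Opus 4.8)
The proof will follow the four-step outline sketched in the introduction: translate the counting problem about the sequence $(f_D)$ into a statement about how the points $f_D \bmod 1$ distribute among the rank-$n$ intervals $E\binom{1\ \cdots\ n}{k_1\ \cdots\ k_n}$; estimate the total length of the ``good'' intervals (those with $k_j \le \varphi(j)$); prove a quantitative equidistribution estimate for $(f_D \bmod 1)$ relative to unions of such intervals; and finally combine these to bound the count.

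\textbf{Step I--II: the interval decomposition and measure estimate.}
First I would recall, following \cite[Chapter III]{Khinchin} (to be made precise in Lemma \ref{LemProp1}), that each set $E\binom{1\ 2\ \cdots\ n}{k_1\ k_2\ \cdots\ k_n}$ is an interval whose endpoints are the two fractions $\frac{p_n}{q_n}$ and $\frac{p_n+p_{n-1}}{q_n+q_{n-1}}$ built from the partial quotients $k_1,\dots,k_n$, so that its length is $\frac{1}{q_n(q_n+q_{n-1})}$. The union $G_n$ of the good intervals of rank $n$ — those with $k_j\le\varphi(j)$ for all $j\le n$ — then has a Lebesgue measure that factors (using the recursion for the $q_n$ and the standard computation in Khinchin) as a telescoping product; I expect the estimate
\[
|G_n|\le \prod_{j=1}^{n}\Bigl(1-\frac{1}{3(\varphi(j)+2)}\Bigr),
\]
which accounts for the first term in the bound. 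The worst coefficients are the $a_j$ with $\varphi(a_j)<\infty$; at indices $a\notin\{a_n\}$ there is no constraint and the factor is $1$. I would also bound the \emph{number} of good intervals of rank $a_n$ by $\prod_{j=1}^{n}\varphi(a_j)$ and, more importantly, keep track of how finely one must refine to control the boundary effects — this is where the $L$ appears: one subdivides each rank-$a_j$ good interval into roughly $\varphi(a_j)$ pieces and further refines so that each piece has length at least something like $\frac{1}{(\cdots)L}$, incurring at most $\sum_j(n-j+1)\varphi(a_j)+\bigl(\sum_j a_j-\tfrac{n(n+1)}{2}\bigr)L$ subintervals in total (the parenthetical $\sum a_j-\binom{n+1}{2}$ counts the ``free'' indices between the constrained ones).

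\textbf{Step III: quantitative equidistribution.}
Since $(f_D)$ is increasing with $(f_{D+1}-f_D)$ decreasing (this is exactly the behavior of $\sqrt D$ and of $\frac{1+\sqrt D}{2}$), the points $f_1,\dots,f_X$ form an increasing sequence with decreasing gaps that grows like $f_X$. To count how many $D\le X$ have $f_D\bmod 1$ in a fixed interval $I=[\alpha,\beta)$, I would apply the Erd\H{o}s--Tur\'an inequality \cite[Corollary 1.1]{Montgomery}: the discrepancy of $\{f_D\bmod 1 : 1\le D\le X\}$ is controlled by $\frac1M+\sum_{h=1}^{M}\frac1h\bigl|\sum_{D=1}^{X}e(hf_D)\bigr|$ for any $M$. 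The exponential sums $\sum_{D\le X}e(hf_D)$ are handled by van der Corput's lemma / partial summation against the monotone sequence of gaps: since the second differences of $f_D$ are small and the first differences $f_{D+1}-f_D \asymp f_X/X$ are like $1/\sqrt{X}$ for $f_D=\sqrt D$, one gets a bound of order $h^{1/2}X^{1/2}f_{X+1}^{1/2}$ plus a term $\frac{1}{f_{X+2}-f_{X+1}}$ coming from the endpoint of the range where the gaps are smallest. Summing over $h\le M$ with the optimal $M$ and reorganizing yields
\[
\Bigl|\#\{D\le X: f_D\bmod 1\in I\} - X|I|\Bigr|
\le (3\pi+1)X^{1/2}f_{X+1}^{1/2}+\frac{\pi}{2}\cdot\frac{1}{f_{X+2}-f_{X+1}},
\]
i.e. every individual interval is hit with frequency $|I|X$ up to this error.

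\textbf{Step IV: assembling the bound.}
Writing the target set as $\bigcap_n$ (points landing in $G_n$), I would bound its cardinality by the count for a \emph{single} large rank, say rank $a_n$: it is at most $\sum_{I}\#\{D\le X: f_D\bmod 1\in I\}$ over the good intervals $I$ of rank $a_n$. Applying the Step III estimate to each such $I$ gives $X|G_n| + (\text{number of intervals})\cdot(\text{discrepancy error})$. Using $|G_n|\le\prod_{j\le n}(1-\frac1{3(\varphi(j)+2)})$ plus the fact that any interval shorter than $2/L$ contributes a negligible $X\cdot\frac2L$ overcount per ``merge'' (hence the $\frac{2(n-1)}{L}$ term), and using the Step II count $\sum_j(n-j+1)\varphi(a_j)+(\sum_j a_j-\binom{n+1}{2})L$ for the number of intervals, produces exactly the claimed three-part bound. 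The three free parameters $X,L,n$ are left in so that one can optimize later (in Corollary \ref{CorBound} and Theorem \ref{th:main}).

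\textbf{Main obstacle.}
The delicate point is Step III, the exponential-sum/discrepancy estimate: one must bound $\sum_{D\le X}e(hf_D)$ \emph{uniformly in $h$ and with an explicit constant}, using only that $(f_D)$ is increasing with decreasing gaps — not any smoothness of a closed form. Getting the right shape $X^{1/2}f_{X+1}^{1/2}$ together with the additive $\frac{1}{f_{X+2}-f_{X+1}}$ term, with clean constants compatible with Erd\H{o}s--Tur\'an, requires careful partial summation / van der Corput bookkeeping; a secondary subtlety is the combinatorial accounting in Step II that cleanly separates the constrained indices $a_j$ from the free ones and yields precisely the coefficient $\sum_j a_j-\tfrac{n(n+1)}{2}$ of $L$.
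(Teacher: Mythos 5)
Your four-step skeleton matches the paper's, and Steps~I and IV are on target, but two technical pieces are off in ways that would prevent the argument from closing.

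First, the role of $L$ in Step~II is misdescribed. Because indices $a\notin\{a_n\}$ carry \emph{no} constraint, the set $F_\varphi^{(a_n)}$ is typically an \emph{infinite} union of rank-$a_n$ intervals, so ``the number of good intervals of rank $a_n$'' is not $\prod_j\varphi(a_j)$ and cannot be fed into the per-interval discrepancy bound. The purpose of $L$ is not to ``subdivide'' or ``refine'' (that increases the interval count) but to \emph{truncate}: at each unconstrained index the partial quotients $k\ge L+1$ are lumped into a single short interval $I(k_1,\dots,k_{N-1})$ of length $<\tfrac{1}{L\,q_{N-1}^2}$, making the cover a finite union with the stated count. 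The extra measure created by these lumped tails is then controlled by the estimate $\sum_{k_1,\dots,k_N\ge 1} q_N(k_1,\dots,k_N)^{-2}<2$ (the paper's Lemma~\ref{LemProp3}), which yields the $\tfrac{2(n-1)}{L}$ term. You do not invoke any such bound, and your heuristic ``$X\cdot\tfrac{2}{L}$ overcount per merge'' does not substitute for it.

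Second, your Step~III exponential-sum estimate has the wrong $h$-dependence. You propose $\bigl|\sum_{D\le X}e(hf_D)\bigr|\lesssim h^{1/2}X^{1/2}f_{X+1}^{1/2}$, a van der Corput square-root-type saving. The paper's estimate (from the Kuipers--Niederreiter lemma, proved by Abel summation against the monotone gaps $\Delta f_D$) is instead $\bigl|\sum_{D\le X}e(hf_D)\bigr|\lesssim \pi h f_{X+1}+\frac{1}{\pi h\,\Delta f_{X+1}}$, i.e., linear growth in $h$ plus a $1/h$ term; the combination $X^{1/2}f_{X+1}^{1/2}$ only appears after summing $\tfrac1h\bigl|\cdot\bigr|$ over $h\le K$ in Erd\H{o}s--Tur\'an and optimizing $K\approx X^{1/2}/f_{X+1}^{1/2}$. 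Running Erd\H{o}s--Tur\'an with your $h^{1/2}$ bound would produce a different optimal $K$ and a discrepancy of a different shape (roughly $X^{2/3}f_{X+1}^{1/6}$ rather than $X^{1/2}f_{X+1}^{1/2}$), so the claimed constant $(3\pi+1)$ and the $\tfrac{\pi}{2}\cdot\frac{1}{f_{X+2}-f_{X+1}}$ term would not emerge.
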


\begin{rmk}
The assumptions that the sequence $(f_{D})_{D=1}^{\infty}$ is  increasing and $(f_{D+1}-f_{D})_{D=1}^{\infty}$ is decreasing can be relaxed, see Step III of the proof below. We skipped this more general form to keep the bound less complicated.
\end{rmk}

In fact, in order to get the density zero result in the case of the sequence $\big(\sqrt{D}-\lfloor\sqrt{D}\rfloor\big)_{D=1}^{\infty}$, it is enough to use the fact that this sequence is equidistributed. This means that for every set $I$ which is a finite sum of intervals, we have
\begin{align}\label{EquiLim}
\lim_{X\to\infty}\frac{\#\big\{\ 1\leq D\leq X\ \big|\ \sqrt{D}-\lfloor\sqrt{D}\rfloor\in I \ \big\}}{X}=\mu (I),
\end{align}
where, as in the rest of the paper, $\mu$ denotes the Lebesgue measure. Thanks to this, it is enough to study the set
\begin{align*}
F_{\varphi}:= \{\ \alpha=[0; u_1, u_2, \dots] \in [0,1)\ |\ \ u_{n}\leq \varphi (n)  \textrm{ for all } n \ \}.
\end{align*}
This provides a motivation to focus on the set $F_{\varphi}$ and we do so in our proof.

\medskip

We divide the proof of Theorem \ref{ThmBound} into the following four steps.
\begin{enumerate}
\item[{\bf I.}]  We show that the set $F_{\varphi}$ is contained in sets with a small Lebesgue measure (depending on the parameter $n$).
\item[{\bf II.}] We modify the sets from Step I to be the unions of finitely many intervals and still have a small Lebesgue measure (both the number of intervals and the Lebesgue measure of the new sets will depend on the parameter $L$).
\item[{\bf III.}] We prove a quantitative version of \eqref{EquiLim} for $f_D$.
\item[{\bf IV.}] We use the results from the previous steps to prove the main statement.
\end{enumerate}

\subsection*{Step I} In the first step our aim is to prove the following theorem.

\begin{thm}\label{StepI}
Let
\begin{align*}
F_{\varphi}:= \{\ \alpha=[0; u_1, u_2, \dots]\in [0,1)\ |\ \ u_{n}\leq \varphi (n)    \textrm{ for all } n \ \}.
\end{align*}
Then for every $n\in\mathbb{N}$ there exists a set $F_{\varphi}^{(a_{n})}$ such that $F_{\varphi}\subseteq F_{\varphi}^{(a_{n})}$ and
\begin{align*}
\mu\left(F_{\varphi}^{(a_{n})}\right)<\prod_{j=1}^{n}\left(1-\frac{1}{3(\varphi (j)+2)}\right).
\end{align*}
\end{thm}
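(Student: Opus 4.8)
The plan is to build the sets $F_\varphi^{(a_n)}$ by successively imposing, at each relevant index $a_1 < a_2 < \cdots$, the restriction that the continued fraction coefficient there is bounded by $\varphi(a_j)$, and to track how much Lebesgue measure each such restriction costs. The starting point is the explicit description of the sets $E\begin{pmatrix} 1 & 2 & \cdots & t \\ k_1 & k_2 & \cdots & k_t \end{pmatrix}$ as intervals (the "intervals of rank $t$"), whose endpoints are the convergents $p_t/q_t$ and $(p_t+p_{t-1})/(q_t+q_{t-1})$ built from $k_1,\dots,k_t$; in particular such an interval has length $1/(q_t(q_t+q_{t-1}))$, a fact I would extract from Lemma~\ref{LemProp1} (referenced but not yet stated in the excerpt) or reprove directly from the recurrences. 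The key local computation is: inside a fixed rank-$t$ interval $E$, the sub-portion on which the next coefficient $u_{t+1}$ is at most some bound $b$ is the union $\bigcup_{k=1}^{b} E\begin{pmatrix} 1 & \cdots & t & t+1 \\ k_1 & \cdots & k_t & k \end{pmatrix}$, and a direct calculation with the recurrences shows this union occupies a proportion of $\mu(E)$ that is bounded above by something like $1 - \frac{1}{3(b+2)}$ — the worst case being $q_{t-1}/q_t$ close to $1$, so one needs the crude but uniform bound. This is the one genuinely computational lemma, and I expect it to be the main obstacle: getting a clean, index-independent constant (the "$3$" and the "$+2$") out of the ratio $\frac{q_t + q_{t-1}}{q_{t+1}^{(b)} + q_t}$ where $q_{t+1}^{(b)} = b q_t + q_{t-1}$, summed appropriately over $k = 1,\dots,b$, requires care with the extreme configurations.

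With that lemma in hand, the construction is an induction on $n$. Define $F_\varphi^{(a_0)} := [0,1)$. Given $F_\varphi^{(a_{n-1})}$, which I will maintain as a disjoint union of rank-$a_{n-1}$ intervals covering $F_\varphi$, pass first to the refinement into rank-$(a_n - 1)$ subintervals (this is a measure-preserving partition: every rank-$t$ interval is partitioned by the rank-$(t+1)$ intervals inside it, and iterating does not lose measure), and then, within each such rank-$(a_n-1)$ interval, keep only the part where $u_{a_n} \le \varphi(a_n)$. By the local lemma this multiplies the measure of each piece — hence the total measure — by at most $1 - \frac{1}{3(\varphi(a_n)+2)}$. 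Since $\varphi(j) = \infty$ for $j \notin \{a_1, a_2, \dots\}$, the factor $1 - \frac{1}{3(\varphi(j)+2)}$ equals $1$ for those $j$, so the product $\prod_{j=1}^{a_n}\left(1 - \frac{1}{3(\varphi(j)+2)}\right)$ over \emph{all} indices up to $a_n$ coincides with the product over just the restricted indices $a_1,\dots,a_n$; I would state the bound in the form $\prod_{j=1}^{n}\left(1 - \frac{1}{3(\varphi(a_j)+2)}\right)$ and note this equals the displayed $\prod_{j=1}^{n}\left(1 - \frac{1}{3(\varphi(j)+2)}\right)$ precisely because in the theorem's normalization $a_j = j$ is implicitly the case of interest, or more honestly, rewrite to match whichever indexing the paper intends. (Here I would double-check the indexing convention in the statement: the displayed product runs $j=1$ to $n$ over $\varphi(j)$, which matches the restricted indices only if $a_j = j$; if $\varphi$ can have $\infty$ values the correct general bound is the product over $\varphi(a_j)$, and I would flag that the stated form presumes the restricted indices are an initial segment or reindex accordingly.)

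Finally, $F_\varphi \subseteq F_\varphi^{(a_n)}$ is immediate from the construction: at every stage we only ever \emph{remove} points that violate one of the defining inequalities $u_j \le \varphi(j)$, and every point of $F_\varphi$ satisfies all of them. The strict inequality $\mu(F_\varphi^{(a_n)}) < \prod_{j=1}^n(\cdots)$ rather than $\le$ comes from the fact that the local proportion bound $1 - \frac{1}{3(b+2)}$ is itself strict (the exact proportion is a rational number strictly below it in every configuration, or one can simply absorb a harmless strict inequality at the first step where $\varphi(a_1) \ne \infty$, which exists since $\varphi(1) \ne \infty$). I would close by remarking that the number of intervals comprising $F_\varphi^{(a_n)}$ is finite and bounded in terms of $\varphi(a_1),\dots,\varphi(a_n)$ and $a_n$ — a bound worth recording explicitly here, since Step II will need to control it, though strictly speaking Step~I as stated only claims the measure bound.
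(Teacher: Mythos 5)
Your proposal follows essentially the same route as the paper: the heart of the matter is Khinchin's inequality (the paper's Lemma~\ref{LemIneq}, which is extracted from the proof of Theorem~29 in \cite{Khinchin}) stating that within any rank-$t$ interval $J_t$ the sub-portion with $u_{t+1}>N$ has measure $>\frac{1}{3(N+2)}\mu(J_t)$, and the construction of $F_\varphi^{(a_n)}:=\{\alpha\mid u_k\le\varphi(k)\text{ for }k\le a_n\}$ is then handled by induction over the restricted indices $a_1<a_2<\cdots$, exactly as you outline. Your worry that nailing the constant $1/(3(b+2))$ would be the main obstacle is overly cautious: the exact ratio is $\frac{q_t+q_{t-1}}{(b+1)q_t+q_{t-1}}$, which after writing $s=q_{t-1}/q_t\in[0,1]$ is an increasing function of $s$ with minimum $\frac{1}{b+1}$, so the lemma's $\frac{1}{3(b+2)}$ is a very generous underestimate and comes out with no care at all; in any case the paper simply cites Khinchin for it.

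Your parenthetical flag about the indexing is a genuine catch. The product as displayed, $\prod_{j=1}^n\bigl(1-\tfrac{1}{3(\varphi(j)+2)}\bigr)$, runs over the first $n$ raw indices and therefore contains vacuous factors equal to $1$ at every $j\le n$ with $\varphi(j)=\infty$; what the inductive argument actually produces is the smaller quantity $\prod_{j=1}^n\bigl(1-\tfrac{1}{3(\varphi(a_j)+2)}\bigr)$, over the $n$ restricted indices $a_1,\dots,a_n$, and this is also what Corollary~\ref{CorBound} needs (there $\varphi(a_j)=B$ for every $j$, and the corollary uses the exponent $n$, not $\lceil n/2\rceil$). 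So the statement as printed is a misprint, and your corrected form $\prod_{j=1}^n\bigl(1-\tfrac{1}{3(\varphi(a_j)+2)}\bigr)$ is the right one.

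One small point to fix in your closing remark: $F_\varphi^{(a_n)}$ is generally \emph{not} a finite union of intervals. As soon as some index $k\le a_n$ has $\varphi(k)=\infty$ (which is the typical situation here, since the odd-index bound leaves all even indices unrestricted), the decomposition into rank-$a_n$ intervals is countably infinite. This does not affect the measure bound you are proving, but it is precisely the reason Step~II exists, so it is worth being accurate about; the finite interval count is a property of the modified set $F_\varphi^{(a_n,L)}$ built in Theorem~\ref{StepII}, not of $F_\varphi^{(a_n)}$ itself.
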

\begin{proof}
The idea of the proof comes from \cite[Chapter III]{Khinchin}. We will need the following fact that is proved as part of the proof of \cite[Theorem 29]{Khinchin}.

\begin{lem}\label{LemIneq}
Let $J_{n}$ be an interval of rank $n$ and for every $k$ let $J_{n+1}^{(k)}$ be the subinterval of $J_{n}$ of numbers satisfying $u_{n+1}=k$. Then for every $N\in\mathbb{N}_{0}$:
\begin{align*}
\mu\left(\bigcup_{k=N+1}^{\infty}J_{n+1}^{(k)}\right)>\frac{1}{3(N+2)}\mu (J_{n}).
\end{align*}
\end{lem}

For an interval $J_{M}$ of rank $M$ let $J_{M,t}^{(k_{1},\ldots ,k_{t})}$ denote the subinterval of $J_{M}$ such that $u_{M+j}=k_{j}$ for every $1\leq j\leq t$. Let $M=a_{n}$ and $N=a_{n+1}$. Let us denote $L:=N-M$. Then by Lemma \ref{LemIneq} we get:
\begin{align*}
 &\ \mu  \left(\bigcup_{k_{1}=1}^{\infty} \cdots \bigcup_{k_{L-1}=1}^{\infty}\bigcup_{k_{L}=1}^{\varphi (n+1)} J_{M,L}^{(k_{1},\ldots ,k_{L})}\right)  \\
 = &\ \mu\left(\bigcup_{k_{1}=1}^{\infty}\cdots \bigcup_{k_{L}=1}^{\infty} J_{M,L}^{(k_{1},\ldots ,k_{L})}\right) - \mu\left(\bigcup_{k_{1}=1}^{\infty}\cdots \bigcup_{k_{L-1}=1}^{\infty}\bigcup_{k_{L}=\varphi (n+1)+1}^{\infty} J_{M,L}^{(k_{1},\ldots ,k_{L})}\right) \\
 = &\ \mu (J_{M})-\sum_{k_{1}=1}^{\infty}\sum_{k_{2}=1}^{\infty}\cdots\sum_{k_{L-1}=1}^{\infty}\mu\left(\bigcup_{k_{L}=\varphi (n+1)+1}^{\infty}J_{M,L}^{(k_{1},\ldots ,k_{L})}\right) \\
= &\ \mu (J_{M})-\sum_{k_{1}=1}^{\infty}\sum_{k_{2}=1}^{\infty}\cdots\sum_{k_{L-1}=1}^{\infty}\mu\left(\bigcup_{k_{L}=\varphi (n+1)+1}^{\infty}\left( J_{M,L-1}^{(k_{1},\ldots ,k_{L-1})}\right)^{(k_{L})}_{1}\right) \\
< &\ \mu (J_{M})-\frac{1}{3(\varphi(n+1)+2)}\sum_{k_{1}=1}^{\infty}\sum_{k_{2}=1}^{\infty}\cdots\sum_{k_{L-1}=1}^{\infty}\mu\left(J_{M,L-1}^{(k_{1},\ldots ,k_{L-1})}\right) \\
= &\ \mu(J_{M})-\frac{1}{3(\varphi(n+1)+2)}\mu (J_{M})=\left(1-\frac{1}{3(\varphi(n+1)+2)}\right)\mu(J_{M}).
\end{align*}
Denote $\tau_{n}:=1-\frac{1}{3(\varphi(n)+2)}$.

For every $n\in\mathbb{N}$ let:
\begin{align*}
F_{\varphi}^{(a_{n})}:= &\ \{\ \alpha\in [0,1)\ |\ u_{k}\leq \varphi (k) \textrm{ for all } k\leq a_{n}\ \}, \\
\mathcal{J}_{\varphi}^{(a_{n})}:= &\ \{\ J\subseteq [0,1)\ |\ J \textrm{ is an interval of rank } a_{n} \textrm{ with } J\cap F_{\varphi}^{(a_{n})}\neq\emptyset \ \}.
\end{align*}
From the definitions of the above sets we get for every $n$ and for every interval $J_{M}$ of rank $M=a_{n}$:
\begin{enumerate}
\item $F_{\varphi}\subseteq F_{\varphi}^{(a_{n})}$,
\item $F_{\varphi}^{(a_{n+1})}\cap J_{M}\subseteq \bigcup_{k_{1}=1}^{\infty}\cdots \bigcup_{k_{L-1}=1}^{\infty}\bigcup_{k_{L}=1}^{\varphi (n+1)}J_{M,L}^{(k_{1},\ldots ,k_{L})}$,
\item $\bigcup_{J\in\mathcal{J}_{\varphi}^{(a_{n})}}J=F_{\varphi}^{(a_{n})}$.
\end{enumerate}

The above properties imply for every $n$:
\begin{align*}
\mu (F_{\varphi}^{(a_{n+1})})= &\ \sum_{J_{M}\in\mathcal{J}_{\varphi}^{(a_{n})}}\mu (F_{M}^{(a_{n+1})}\cap J_{M})\\
\leq &\  \sum_{J_{M}\in\mathcal{J}_{\varphi}^{(a_{n})}}\mu \left(\bigcup_{k_{1}=1}^{\infty}\cdots \bigcup_{k_{L-1}=1}^{\infty}\bigcup_{k_{L}=1}^{\varphi (n+1)}J_{M,L}^{(k_{1},\ldots ,k_{L})}\right) \\
 < &\ \sum_{J_{M}\in\mathcal{J}_{\varphi}^{(a_{N})}} \tau_{n+1} \mu(J_{M})=\tau_{n+1} \mu\bigg(\bigcup_{J_{M}\in\mathcal{J}_{\varphi}^{(a_{n})}}J_{M}\bigg)\\
 = &\  \tau_{n+1} \mu (F_{\varphi}^{(a_{n})})<\tau_{n+1}\tau_{n}\mu(F_{\varphi}^{(a_{n-1})}) \\
 < &\ \ldots <\tau_{n+1}\tau_{n}\cdots\tau_{1} \mu(F_{\varphi}^{(a_{1})})\leq \prod_{j=1}^{n+1}\tau_{j}.
\end{align*}
This finishes the proof of the first step.
\end{proof}

\subsection*{Step II} The main result of this step is the following.

\begin{thm}\label{StepII}
For every positive integers $n$ and $L$ there exists a set $F_{\varphi}^{(a_{n},L)}$ such that:
\begin{enumerate}
\item $F_{\varphi}\subseteq F_{\varphi}^{(a_{n},L)}$.
\item $F_{\varphi}^{(a_{n},L)}$ is a union of at most
\begin{align*}
\sum_{j=1}^{n}(n-j+1)\varphi (a_{j}) + \left(\sum_{t=1}^{n}a_{t}-\frac{n(n+1)}{2}\right)L
\end{align*}
disjoint intervals.
\item We have
\begin{align*}
\mu \left(F_{\varphi}^{(a_{n},L)}\right) <\prod_{j=1}^{n}\left(1-\frac{1}{3(\varphi (j)+2)}\right)+\frac{5(n-1)}{3L}.
\end{align*}
\end{enumerate}
\end{thm}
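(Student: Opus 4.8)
\textbf{Plan of proof of Theorem \ref{StepII}.}
The goal is to replace the sets $F_\varphi^{(a_n)}$ from Step I, which are unions of \emph{countably} many intervals, by unions of \emph{finitely} many intervals $F_\varphi^{(a_n,L)}$ that still contain $F_\varphi$ and whose measure has only slightly increased. The natural idea is to go back through the construction of $F_\varphi^{(a_n)}$ and, at each place where an infinite union $\bigcup_{k\ge 1}$ over a continued-fraction coefficient appears, truncate it at $k\le L$; the tail $\bigcup_{k>L}$ is an interval of controlled small measure (by the explicit description of rank-$1$ subintervals, $E\binom{1}{k}=[\tfrac1{k+1},\tfrac1k)$, the tail inside a rank-$M$ interval $J_M$ has measure $\le \tfrac1{L+1}\mu(J_M)$, or more precisely one uses the complementary bound to Lemma \ref{LemIneq}), so discarding it — i.e.\ \emph{keeping} only $k\le L$ — loses measure but keeps us as a superset of $F_\varphi$ only if we are careful: the subtle point is that $F_\varphi$ itself may contain numbers with a coefficient larger than $L$ at an index where $\varphi=\infty$, so at those indices we must \emph{not} truncate but instead cover the (single) tail interval as one interval. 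Concretely: at the indices $a_1<a_2<\dots<a_n$ (where $\varphi$ is finite) we already have a finite union, bounded by $\sum_{j}(\dots)\varphi(a_j)$ intervals after accounting for nesting; at each of the remaining indices $\le a_n$ (there are $\sum_t a_t - \tfrac{n(n+1)}2$ of them, counted with the appropriate multiplicity coming from how many earlier rank-$a_j$ intervals they sit inside), we split each ambient interval into the $L$ pieces $u=1,\dots,L-1$ together with the single tail piece $u\ge L$, contributing the $\big(\sum_t a_t-\tfrac{n(n+1)}2\big)L$ term; multiplying out the branching over the $a_j$'s gives exactly the stated interval count.

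For the measure bound (part (3)), I would run the same telescoping estimate as in Step I but now tracking the error introduced by \emph{not} truncating the tails — equivalently, by replacing each infinite union over an index in an $(a_n,a_{n+1})$-block by its truncation plus one tail interval. Between consecutive $a_j$'s there are $a_{j+1}-a_j-1$ "free" indices; at each such index, passing from the true set to the finitely-intervalled superset costs at most the measure of the retained tail interval, and summing $\sum_{k>L}\mu(E\binom{1}{k})\le \sum_{k>L}\tfrac1{k(k+1)}=\tfrac1{L+1}$ over the ambient intervals (whose measures sum to $\le$ the measure of the Step-I set at that stage, hence $\le 1$) gives an extra $\le \tfrac1{L+1}$ per free index. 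There are fewer than $a_n$ free indices but the bound in the statement is $\tfrac{5(n-1)}{3L}$, so the accounting must be sharpened: the point is that the free indices lying in block $j$ each contribute proportionally to $\mu(F_\varphi^{(a_j)})\le \prod_{i\le j}\tau_i<1$ and, more importantly, that one only needs $n-1$ "transitions" between the $a_j$-levels, each transition costing $\le \tfrac{5}{3L}$ once the tail estimate from Lemma \ref{LemIneq} (which produces the factor $\tfrac1{3(N+2)}$, and whose complement is bounded by something like $1-\tfrac1{3(N+2)}$, giving constants of size $\tfrac53$ after the arithmetic) is used in the complementary direction. I would carry out this bookkeeping block-by-block: fix $j$, show that $F_\varphi^{(a_{j+1},L)}\cap J$ for a rank-$a_j$ interval $J$ is a finite union of intervals with $\mu \le \tau_{j+1}\mu(J) + \tfrac{5}{3L}\mu(J)$ and with the right number of intervals, then sum over $J\in\mathcal J_\varphi^{(a_j)}$ and iterate over $j=1,\dots,n-1$, telescoping the $\tau$'s exactly as in Step I and adding up the $n-1$ error terms $\tfrac{5}{3L}$.

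The main obstacle is the \emph{interval-counting} bound in part (2): one has to set up the recursion for how many disjoint intervals $F_\varphi^{(a_n,L)}$ breaks into, being careful that when an ambient rank-$M$ interval is cut at a free index into $L$ pieces ($L-1$ singletons plus a tail), the subsequent cuts at later indices multiply these counts, and then checking that the total telescopes to exactly $\sum_{j=1}^n(n-j+1)\varphi(a_j)+\big(\sum_{t=1}^n a_t-\tfrac{n(n+1)}2\big)L$ rather than something larger — in particular that adjacent retained tail-intervals merge so that no spurious factor of $2$ or extra $+1$ creeps in, and that the $(n-j+1)$ weight on $\varphi(a_j)$ correctly records that the $j$-th finite-coefficient cut is refined by all $n-j$ later stages. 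The measure estimate, by contrast, is routine once the complement of Lemma \ref{LemIneq} is recorded and the per-transition constant $\tfrac53$ is pinned down; the equidistribution input is not needed here at all and is deferred to Step III.
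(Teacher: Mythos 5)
Your high-level idea — truncate each infinite union over a continued-fraction coefficient at $k\le L$ and cover the remainder by a single ``tail'' interval per prefix, so that $F_\varphi^{(a_n)}$ is replaced by a finite union still containing $F_\varphi$ — is indeed the route the paper takes. However, the proposal leaves the two quantitative parts of the theorem unresolved, and in each case the gap is exactly where the paper's argument lives.

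The central problem is the measure estimate in part~(3). You correctly observe that a per-free-index tail bound of $\sum_{k>L}\mu(E\binom{1}{k})=\tfrac{1}{L+1}$, summed over the roughly $a_n-n$ free indices up to $a_n$, gives an error of order $(a_n-n)/L$, which can be far larger than the claimed $O(n/L)$. You then write that ``the accounting must be sharpened'' to get by with only $n-1$ transitions, each costing $\approx \tfrac{5}{3L}$, but you never say what makes this possible. This is not a loose end one can wave past: it is precisely the content of the step. The paper achieves the linear-in-$n$ bound by a different mechanism, which has two ingredients you do not mention. First, it introduces a genuine \emph{interval} $I(k_1,\dots,k_{a_N})$ (Lemma~\ref{LemProp2}) with an explicit measure bound $\mu(I(k_1,\dots,k_{a_N}))<\tfrac{1}{L}\cdot\tfrac{1}{q_{a_N}^2}$ in terms of the continuant $q_{a_N}$ — not in terms of the ambient interval's measure. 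Second, it proves and uses Lemma~\ref{LemProp3}, the bound $\sum_{k_1,\dots,k_N}q_N(k_1,\dots,k_N)^{-2}<2$, which is what makes the contribution of one ``level'' of tails $<\tfrac{2}{L}$ independently of how long the prefix is. Your proposal replaces this by the rank-one estimate $\sum_{k>L}\tfrac{1}{k(k+1)}=\tfrac{1}{L+1}$ applied fractionally to each ambient interval; that is a valid upper bound, but summed over free indices it produces an error proportional to $a_n$, not to $n$, and you offer no device to recover the factor. (You also cite ``the complementary bound to Lemma~\ref{LemIneq},'' but Lemma~\ref{LemIneq} gives a \emph{lower} bound on tail measure; its complement bounds the complement of the tail, not the tail itself.)

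The counting in part~(2) is also asserted rather than argued. You write that ``multiplying out the branching over the $a_j$'s gives exactly the stated interval count,'' but a naive multiplicative count of the rank-$a_n$ subintervals in the main set would be $\prod_j\varphi(a_j)\cdot L^{\#\text{free}}$, which is exponential, not the linear expression in the statement. Getting down to $\sum_j(n-j+1)\varphi(a_j)+\big(\sum_t a_t-\tfrac{n(n+1)}{2}\big)L$ requires a careful recursive bookkeeping (the paper presents a specific list of sets, writes out the double sum $\sum_{t=1}^{n-1}\sum_{j=1}^{t}(a_{j+1}-a_j-1)L$, and simplifies it). Your remarks about adjacent tail intervals merging and ``no spurious factor of $2$'' point at the right kind of bookkeeping, but they are speculations, not an argument. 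In short: the plan identifies the correct geometric picture and correctly flags where the difficulties are, but the two estimates (2) and (3) are exactly those difficulties, and they are left open.
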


\begin{proof}
In this part, we will need the following more precise description of intervals of a fixed rank $N$, that is proved at the beginning of Chapter III in \cite{Khinchin}.

\begin{lem}\label{LemProp1}
Let $(k_{1},\ldots ,k_{N})$ be a sequence of natural numbers. The endpoints of the interval 
\begin{align*}
E\begin{pmatrix}
1 & 2 & \ldots & N \\ k_{1} & k_{2} & \ldots & k_{N}
\end{pmatrix}
\end{align*}
are $\frac{p_{N}}{q_{N}}$ and $\frac{p_{N}+p_{N-1}}{q_{N}+q_{N-1}}$, where $\frac{p_{j}}{q_{j}}$ denotes the $j$th convergent of the continued fraction $[k_{1};k_{2},\ldots ,k_{N}]$.
\end{lem}

For real numbers $a$ and $b$ let us denote
\begin{align*}
[a,b]^{\star}:=\left\{\begin{array}{ll}
[a,b], & \textrm{if } a\leq b, \\
\left[b,a\right], & \textrm{if } b\leq a.
\end{array}\right.
\end{align*}

\begin{lem}\label{LemProp2}
Let $(k_{1},\ldots ,k_{N})$ be a sequence of natural numbers such that $k_{N}\geq L+1$. Then
\begin{align*}
E\begin{pmatrix}
1 & 2 & \ldots & N \\ k_{1} & k_{2} & \ldots & k_{N}
\end{pmatrix}
\subseteq \left[\frac{p_{N-1}+\frac{p_{N-2}}{L+1}}{q_{N-1}+\frac{q_{N-2}}{L+1}},\frac{p_{N-1}}{q_{N-1}}\right]^{\star},
\end{align*}
where the numbers $p_{j}$ and $q_{j}$ are the same as in Lemma $\ref{LemProp1}$.
\end{lem}
\begin{proof}
At first let us assume that $N$ is even. From Lemma \ref{LemProp1} and the recurrence relations satisfied by the numbers $p_{j}$ and $q_{j}$ we get:
\begin{align*}
E\begin{pmatrix}
1 & 2 & \ldots & N \\ k_{1} & k_{2} & \ldots & k_{N}
\end{pmatrix}
&\subseteq
\left[\frac{p_{N}}{q_{N}},\frac{p_{N}+p_{N-1}}{q_{N}+q_{N-1}}\right]\\
&=\left[\frac{k_{N}p_{N-1}+p_{N-2}}{k_{N}q_{N-1}+q_{N-2}},\frac{k_{N}p_{N-1}+p_{N-1}+p_{N-2}}{k_{N}q_{N-1}+q_{N-1}+q_{N-2}}\right].
\end{align*}

It is easy to check that if $a$, $b$, $c$ and $d$ are such that $ad-bc>0$, then the function $f(x)=\frac{ax+b}{cx+d}$ is increasing. Observe that
\begin{align*}
p_{N-1}q_{N-2}-p_{N-2}q_{N-1}=(-1)^{N}=1>0
\end{align*}
and
\begin{align*}
p_{N-1}(q_{N-1}+q_{N-2})-(p_{N-1}+p_{N-2})q_{N-1}=(-1)^{N}=1>0.
\end{align*}
Therefore, if
\begin{align*}
f_{1}(x):=\frac{xp_{N-1}+p_{N-2}}{xq_{N-1}+q_{N-2}} \hspace{1cm} \textrm{ and } \hspace{1cm} f_{2}(x):=\frac{xp_{N-1}+p_{N-1}+p_{N-2}}{xq_{N-1}+q_{N-1}+q_{N-2}},
\end{align*}
then
\begin{align*}
E\begin{pmatrix}
1 & 2 & \ldots & N \\ k_{1} & k_{2} & \ldots & k_{N}
\end{pmatrix}
&\subseteq
\big[ f_{1}(k_{N}),f_{2}(k_{N})\big]\subseteq \big[f_{1}(L+1),f_{2}(\infty)\big]\\
& = \left[\frac{(L+1)p_{N-1}+p_{N-2}}{(L+1)q_{N-1}+q_{N-2}},\frac{p_{N-1}}{q_{N-1}}\right].
\end{align*}
The result follows. If $N$ is odd, then the proof is analogous. The only difference is that the endpoints of the last interval are reversed and the functions $f_{1}$ and $f_{2}$ are decreasing.
\end{proof}

Let us further denote
\begin{align*}
I(k_{1},\ldots ,k_{N-1}):=\left[\frac{p_{N-1}+\frac{p_{N-2}}{L+1}}{q_{N-1}+\frac{q_{N-2}}{L+1}},\frac{p_{N-1}}{q_{N-1}}\right]^{\star}.
\end{align*}
Note that this set does not depend on $k_{N}$.

We are ready to prove Theorem \ref{StepII}. We can write
\begin{align*}
F_{\varphi}^{(a_{n})}=\bigcup_{k_{1}=1}^{\varphi (1)}\bigcup_{k_{2}=1}^{\varphi (2)}\cdots \bigcup_{k_{a_{n}-1}=1}^{\varphi (a_{n}-1)}\bigcup_{k_{a_{n}}=1}^{\varphi (a_{n})}E\begin{pmatrix}
1 & 2 & \ldots & a_{n} \\ k_{1} & k_{2} & \ldots & k_{a_{n}}
\end{pmatrix}.
\end{align*}
Lemma \ref{LemProp2} implies that for every $1\leq N\leq n$ and for every fixed sequence $(k_{1},\ldots ,k_{a_{N}})$ we have
\begin{align*}
\bigcup_{k_{a_{N}+1}=L+1}^{\varphi (a_{N} +1)} & \bigcup_{k_{a_{N}+2}=1}^{\varphi (a_{N}+2)}\bigcup_{k_{a_{N}+3}=1}^{\varphi (a_{N}+3)}
\cdots \bigcup_{k_{a_{n}-1}=1}^{\varphi (a_{n}-1)}\bigcup_{k_{a_{n}}=1}^{\varphi (a_{n})}E\begin{pmatrix}
1 & 2 & \ldots & 2n+1 \\ k_{1} & k_{2} & \ldots & k_{2n+1}
\end{pmatrix} \\
\subseteq &\ \bigcup_{k_{a_{N}+1}=L+1}^{\infty} E\begin{pmatrix}
1 & 2 & \ldots & a_{N}+1 \\ k_{1} & k_{2} & \ldots & k_{a_{N}+1}
\end{pmatrix}
 \subseteq I(k_{1},\ldots ,k_{a_{N}}).
\end{align*}

The assumption $\varphi (1)\neq\infty$ implies $a_{1}=1$. Therefore,
\begin{align*}
F_{\varphi}^{(a_{n})}\subseteq &\ \bigcup_{k_{1}=1}^{\varphi (1)}\bigcup_{k_{2}=1}^{L}\bigcup_{k_{3}=1}^{L}\cdots \bigcup_{k_{a_{2}-1}=1}^{L}\bigcup_{k_{a_{2}}=1}^{\varphi (a_{2})}\bigcup_{k_{a_{2}+1}=1}^{L}\cdots\\
&\ \ \  \bigcup_{k_{a_{3}}=1}^{\varphi(a_{3})}\cdots \bigcup_{k_{a_{n}-1}=1}^{L}\bigcup_{k_{a_{n}}=1}^{\varphi (a_{n})}E\begin{pmatrix}
1 & 2 & \ldots & 2n+1 \\ k_{1} & k_{2} & \ldots & k_{2n+1}
\end{pmatrix} \\
&\ \cup \bigcup_{k_{1}=1}^{\varphi (1)} I(k_{1}) \\
&\ \cup \bigcup_{k_{1}=1}^{\varphi (1)}\bigcup_{k_{2}=1}^{L}\cdots\bigcup_{k_{a_{2}-1}=1}^{L}\bigcup_{k_{a_{2}}=1}^{\varphi (a_{2})} I(k_{1},k_{2},\ldots ,k_{a_{2}}) \\
&\ \cup \bigcup_{k_{1}=1}^{\varphi (1)}\bigcup_{k_{2}=1}^{L}\cdots\bigcup_{k_{a_{2}}=1}^{\varphi (a_{2})}\cdots\bigcup_{k_{a_{3}-1}=1}^{L}\bigcup_{k_{a_{3}}=1}^{\varphi (a_{3})} I(k_{1},k_{2},\ldots ,k_{a_{3}}) \\
&\ \vdots \\
&\ \cup \bigcup_{k_{1}=1}^{\varphi (1)}\bigcup_{k_{2}=1}^{L}\cdots\bigcup_{k_{a_{2}}=1}^{\varphi (a_{2})}\cdots\\
&\ \ \ \bigcup_{k_{a_{3}}=1}^{\varphi (a_{3})}\cdots\bigcup_{k_{a_{4}}=1}^{\varphi (a_{4})}\cdots\bigcup_{k_{a_{n-1}-1}=1}^{L}\bigcup_{k_{a_{n-1}}=1}^{\varphi (a_{n-1})} I(k_{1},k_{2},\ldots ,k_{a_{n-1}}) .
\end{align*}

Let $F_{\varphi}^{(a_{n},L)}$ be the above set containing $F_{\varphi}^{(a_{n})}$. It follows from the definition that $F_{\varphi}^{(a_{n},L)}$ is a union of at most
\begin{align*}
& \sum_{j=1}^{n}(n-j+1)\varphi (a_{j}) + \sum_{t=1}^{n-1}\sum_{j=1}^{t}\big(a_{j+1}-a_{j} -1 \big)L \\ = & \sum_{j=1}^{n}(n-j+1)\varphi (a_{j}) + \left(\sum_{t=1}^{n}a_{t}-\frac{n(n+1)}{2}\right)L
\end{align*}
disjoint intervals and $F_{\varphi}^{(a_{n})}\subseteq F_{\varphi}^{(a_{n},L)}$.

We only need to check the last condition concerning the upper bound for $\mu\left(F_{\varphi}^{(a_{n},L)}\right)$. At first, observe that
\begin{align*}
\mu \big(I(k_{1},\ldots ,k_{N-1})\big) & =\left|\frac{p_{N-1}}{q_{N-1}} - \frac{p_{N-1}+\frac{p_{N-2}}{L+1}}{q_{N-1}+\frac{q_{N-2}}{L+1}}\right|\\ 
& =\frac{1}{L+1} \cdot \frac{1}{q_{N-1}\left(q_{N-1}+\frac{q_{N-2}}{L+1}\right)}<\frac{1}{L}\cdot \frac{1}{q_{N-1}^{2}},
\end{align*}
where $p_{j}/q_{j}$ are convergents of the continued fraction $[k_{1},\ldots ,k_{2N-1}]$. 

\medskip

The next two lemmas provide an upper bound for a multiple infinite power series containing fractions of the form $1/q_{N}^{2}$.

Note that results such as our Lemma \ref{LemProp3} appear, e.g., in \cite[Theorem 11]{Go}, \cite[page 168]{Cu}, however, we were not able to find a proof in the literature (and it is unclear if the cited statements are actually correct), and so we give a full proof.

\begin{lem}\label{LemProp3}
For a sequence $(k_{1},\ldots ,k_{s})$ of natural numbers,  denote the $j$th convergent of  $[k_{1};\ldots ,k_{s}]$ by $p_{j}(k_{1},\ldots ,k_{s})/q_{j}(k_{1},\ldots ,k_{s})$. Then for every $N\geq 1$:
\begin{align*}
\sum_{k_{1}=1}^{\infty}\sum_{k_{2}=1}^{\infty}\cdots \sum_{k_{N}=1}^{\infty}\frac{1}{q_{N}(k_{1},\ldots ,k_{N})^{2}}< 2.
\end{align*}
\end{lem}

Note that with slightly more effort, one can actually also show this result with $3/2$ as the upper bound.

\begin{proof}
From Lemma \ref{LemProp1} and the fact that the intervals of a given rank $N$ are pairwise disjoint and sum to the unit interval, we have
\begin{align*}
1 & =\sum_{k_{1}=1}^{\infty}\sum_{k_{2}=1}^{\infty}\cdots \sum_{k_{N}=1}^{\infty}\left| E\begin{pmatrix}
1 & 2 & \ldots & N \\ k_{1} & k_{2} & \ldots & k_{N}
\end{pmatrix} \right| \\
& = \sum_{k_{1}=1}^{\infty}
\cdots \sum_{k_{N}=1}^{\infty}\left| \frac{p_{N}(k_{1},\ldots ,k_{N})}{q_{N}(k_{1},\ldots ,k_{N})} - \frac{p_{N}(k_{1},\ldots ,k_{N}) + p_{N-1}(k_{1},\ldots ,k_{N-1})}{q_{N}(k_{1},\ldots ,k_{N}) + q_{N-1}(k_{1},\ldots ,k_{N-1})} \right| \\
& = \sum_{k_{1}=1}^{\infty}\sum_{k_{2}=1}^{\infty}
\cdots \sum_{k_{N}=1}^{\infty}\frac{1}{q_{N}(k_{1},\ldots ,k_{N}) \big(q_{N}(k_{1},\ldots ,k_{N}) + q_{N-1}(k_{1},\ldots ,k_{N-1})\big)} \\
& > \frac{1}{2} \sum_{k_{1}=1}^{\infty}\sum_{k_{2}=1}^{\infty}\cdots \sum_{k_{N}=1}^{\infty}\frac{1}{q_{N}(k_{1},\ldots ,k_{N})^{2}}
\end{align*}
The result follows.
\end{proof}

We now use Lemma \ref{LemProp3} to find an upper bound for $\mu\left(F_{\varphi}^{(a_{n},L)}\right)$:
\begin{align*}
\mu&\left( F_{\varphi}^{(a_{n},L)}\right)\\ 
\leq &\ \mu\left(\bigcup_{k_{1}=1}^{\varphi (1)}\bigcup_{k_{2}=1}^{L}\bigcup_{k_{3}=1}^{L}\cdots\bigcup_{k_{a_{2}}=1}^{\varphi (a_{2})}\cdots \bigcup_{k_{a_{n}-1}=1}^{L}\bigcup_{k_{a_{n}}=1}^{\varphi (a_{n})}E\begin{pmatrix}
1 & 2 & \ldots & a_{n} \\ k_{1} & k_{2} & \ldots & k_{a_{n}}
\end{pmatrix}\right) \\
&\ + \sum_{k_{1}=1}^{\varphi (1)} \mu (I(k_{1})) \\
&\ + \sum_{k_{1}=1}^{\varphi (1)}\sum_{k_{2}=1}^{L}\cdots\sum_{k_{a_{2}-1}=1}^{L}\sum_{k_{a_{2}}=1}^{\varphi (a_{2})} \mu(I(k_{1},\ldots ,k_{a_{2}})) \\
&\ \vdots \\
&\ + \sum_{k_{1}=1}^{\varphi (1)}\sum_{k_{2}=1}^{L}\cdots \sum_{k_{a_{n-1}-1}=1}^{L}\sum_{k_{a_{n-1}}=1}^{\varphi (a_{n-1})} \mu(I(k_{1},k_{2},\ldots ,k_{a_{n-1}})) \\
< &\ \mu \left(F_{\varphi}^{(a_{n})}\right)+\sum_{k_{1}=1}^{\varphi (1)}\frac{1}{L}\cdot\frac{1}{q_{1}(k_{1})^{2}}+\cdots\\ 
&\ +\sum_{k_{1}=1}^{\varphi (1)}\sum_{k_{2}=1}^{L}\cdots \sum_{k_{a_{n-1}}=1}^{\varphi (a_{n-1})}\frac{1}{L}\cdot\frac{1}{q_{a_{n-1}}(k_{1},\ldots ,k_{a_{n-1}})^{2}} \\
< &\ \mu \left(F_{B}^{(2n+1)}\right)+\frac{2(n-1)}{L}.
\end{align*}
Theorem \ref{StepII} follows.
\end{proof}

\subsection*{Step III} 
Now we want to prove the following quantitative version of \eqref{EquiLim}.

\begin{thm}\label{StepIII}
For every $X\in\mathbb{N}$ and for every interval $[a,b]\subseteq [0,1]$ we have
\begin{align*}
\left|\ \#\big\{\ 1\leq D\leq X\ \big|\  f_{D}\bmod{1} \in [a,b] \ \big\} - (b-a)X \ \right|\\
<(3\pi +1) X^{1/2}f_{X+1}^{1/2} + \frac{\pi}{2}\cdot\frac{1}{\Delta f_{X+1}}.
\end{align*}
\end{thm}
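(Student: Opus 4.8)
The plan is to bound the discrepancy of the sequence $(f_D \bmod 1)_{D=1}^{X}$ by applying the Erd\H{o}s--Tur\'an inequality, which reduces the problem to estimating the exponential sums $S_h := \sum_{D=1}^{X} e^{2\pi i h f_D}$ for $1 \le h \le H$, where $H$ is a cutoff parameter to be chosen at the end. Concretely, Erd\H{o}s--Tur\'an (in the form \cite[Corollary 1.1]{Montgomery}) gives
\begin{align*}
\Big| \#\{1\le D\le X : f_D \bmod 1 \in [a,b]\} - (b-a)X \Big| \le \frac{X}{H+1} + 3\sum_{h=1}^{H} \frac{|S_h|}{h},
\end{align*}
so everything hinges on a good bound for $|S_h|$ that is uniform in $h$.

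To estimate $|S_h|$, I would use the standard van der Corput / Kusmin--Landau type argument for exponential sums with a smooth, monotone phase. Here the phase is $\phi_h(D) = h f_D$; since $(f_D)$ is increasing and $(f_{D+1}-f_D)$ is decreasing, the discrete derivative $h(f_{D+1}-f_D)$ is positive and monotone (decreasing) in $D$. The cleanest tool is summation by parts combined with the elementary bound $\big|\sum_{D=1}^{N} e^{2\pi i \theta_D}\big| \le \frac{1}{2\,\|\theta\|}$-type estimates, or more directly: writing $S_h$ as a Stieltjes-type sum and using that the differences $f_{D+1}-f_D$ range over an interval of length at most $f_2 - f_1 \le f_{X+1}$ while staying above $\Delta f_{X+1} := f_{X+2}-f_{X+1}$, one gets a bound of the shape $|S_h| \ll \sqrt{h X f_{X+1}} + \frac{1}{h\,\Delta f_{X+1}}$. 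The first term is the genuinely oscillatory contribution (it matches the heuristic $|S_h| \approx \sqrt{X}$ when $f_{X+1} \approx X$, as for $\sqrt D$), and the second comes from the endpoints where the phase moves too slowly to produce cancellation. I would derive this by splitting the range of $D$ according to dyadic blocks on which $h(f_{D+1}-f_D)$ is roughly constant, applying the geometric-sum bound on each block, and summing; alternatively one can invoke a ready-made second-derivative test after interpolating $f_D$ to a smooth function, but the discrete monotonicity hypotheses are tailored to avoid needing that.

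Plugging the bound $|S_h| \le C_1 \sqrt{h X f_{X+1}} + C_2 \frac{1}{h \Delta f_{X+1}}$ into Erd\H{o}s--Tur\'an, the sum $\sum_{h\le H} |S_h|/h$ splits as $C_1 \sqrt{X f_{X+1}} \sum_{h\le H} h^{-1/2} + C_2 \frac{1}{\Delta f_{X+1}} \sum_{h\le H} h^{-2}$, which is $\ll C_1 \sqrt{H X f_{X+1}} + C_2 \frac{1}{\Delta f_{X+1}}$. Together with the $X/(H+1)$ term we obtain a bound $\ll \frac{X}{H} + \sqrt{H X f_{X+1}} + \frac{1}{\Delta f_{X+1}}$. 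Now I would \emph{not} optimize $H$ in general but simply take $H \to \infty$: indeed, since $f_D \to \infty$, for each fixed $X$ the phase $h f_D$ can be made to produce the stated bound — more carefully, one should choose $H = \lceil \sqrt{X/f_{X+1}}\,\rceil$ or similar so that $X/H \asymp \sqrt{X f_{X+1}}$, and then track the absolute constants to land exactly on $(3\pi+1)X^{1/2} f_{X+1}^{1/2} + \frac{\pi}{2}\,\frac{1}{\Delta f_{X+1}}$. Getting these precise constants $3\pi+1$ and $\pi/2$ (rather than an unspecified $O(\cdot)$) is the part that requires care: one must use the sharp form of the geometric-sum estimate $\big|\sum e^{2\pi i n\theta}\big| \le \min(N, \frac{1}{2\|\theta\|})$, keep the constant $3$ from Erd\H{o}s--Tur\'an explicit, and be careful with the $\pi$'s entering from $|1-e^{2\pi i\theta}| \ge 4\|\theta\|$ versus $2\pi\|\theta\|$.

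The main obstacle I anticipate is the bookkeeping of explicit constants through the van der Corput step: the qualitative estimate is textbook, but matching the exact coefficients $3\pi+1$ and $\pi/2$ forces one to be disciplined about every inequality (which form of the saw-tooth bound, whether one loses a factor of $2$ in partial summation, how the boundary blocks are handled). A secondary subtlety is making sure the argument genuinely uses only ``$(f_D)$ increasing and $(f_{D+1}-f_D)$ decreasing'' — as the remark after Theorem~\ref{ThmBound} promises a relaxation — rather than, say, a smoothness assumption on an interpolant; this means the exponential-sum bound should be proved by discrete Abel summation against the monotone sequence $(f_{D+1}-f_D)$ directly.
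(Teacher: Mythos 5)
Your opening move — Erd\H{o}s--Tur\'an with cutoff $K$ reducing the problem to bounding $\sum_{k\le K}|S_k|/k$ — is exactly what the paper does. The divergence is in the exponential sum estimate, and there the proposal has a real problem, not just a bookkeeping one.

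The paper does not use a van der Corput / dyadic decomposition. It cites a ready-made discrete Abel-summation bound (from the proof of Theorem~2.5 in Kuipers--Niederreiter) that reads
\begin{align*}
\Big|2\pi i k\, S_k\Big| \le \sum_{D=1}^{X}\Big|\tfrac{1}{\Delta f_D}-\tfrac{1}{\Delta f_{D+1}}\Big| + 2\pi^2 k^2\sum_{D=1}^{X}|\Delta f_D| + \tfrac{1}{|\Delta f_{X+1}|}+\tfrac{1}{|\Delta f_1|},
\end{align*}
and the two sums telescope precisely because $(\Delta f_D)$ is monotone, giving $|S_k| \le \pi k f_{X+1} + \frac{1}{\pi k\,\Delta f_{X+1}}$. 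Note the shape: the oscillatory term grows \emph{linearly} in $k$. Your claimed bound $|S_h|\ll\sqrt{hXf_{X+1}}+\frac{1}{h\Delta f_{X+1}}$ is of second-derivative-test type. Two issues follow. First, you cannot derive a $\sqrt{\cdot}$-type saving from the stated hypotheses: monotonicity of $(\Delta f_D)$ gives a sign condition on the discrete second difference but no size control, and the Kusmin--Landau / dyadic-block argument you sketch needs precisely that to control block lengths. Second, even if it were available, your bound is \emph{worse} than the paper's in the relevant range: for $k\le K=\lfloor\sqrt{X/f_{X+1}}\rfloor$ one has $kf_{X+1}\le\sqrt{Xf_{X+1}}\le\sqrt{kXf_{X+1}}$. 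Plugging your $|S_h|$ into Erd\H{o}s--Tur\'an and optimizing $H$ gives a discrepancy bound of order $X^{2/3}f_{X+1}^{1/3}$, which exceeds the stated $X^{1/2}f_{X+1}^{1/2}$ whenever $f_{X+1}<X$ (e.g.\ $X^{5/6}$ instead of $X^{3/4}$ for $f_D=\sqrt{D}$). So the constants $3\pi+1$ and $\pi/2$ are not a bookkeeping problem with your route; the exponent itself does not come out.

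You do flag the correct repair in your last paragraph — ``discrete Abel summation against the monotone sequence $(f_{D+1}-f_D)$ directly'' — and if you develop that remark instead of the dyadic sketch, you get a bound linear in $k$, and the explicit constants fall out in a few lines exactly as the paper records them, with $K=\lfloor X^{1/2}/f_{X+1}^{1/2}\rfloor$ and $\sum_{k\ge1}k^{-2}<\pi^2/6$.
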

\begin{proof}
Let us denote the quantity on the left-hand side of the statement by $D(X,[a,b])$. We will use the Erd\H{o}s--Tur\'{a}n inequality, see \cite[Corollary 1.1]{Montgomery}.

\begin{lem}\label{LemTrig1}
For every positive integers $X$ and $K$,
\begin{align*}
\big| D(X,[a,b]) \big|\leq \frac{X}{K+1}+3\sum_{k=1}^{K}\frac{1}{k}\left|\sum_{D=1}^{X}e^{2\pi i k f_{D}}\right|.
\end{align*}
\end{lem}

Our task is to find a bound for the trigonometric sum $\left|\sum_{D=1}^{X}e^{2\pi i k f_{D}}\right|$. First, the following lemma is established within the proof of \cite[Theorem 2.5]{KuipersNiederreiters}.

\begin{lem}\label{LemTrig2}
Let $(f_{D})_{D=1}^{\infty}$ be a sequence of real numbers such that the sequence $\Delta f_{D}:=f_{D+1}-f_{D}$ is monotone. Then for every positive integers $X$ and $k$,
\begin{align*}
& \left|2\pi i k\sum_{D=1}^{X}e^{2\pi i k f_{D}}\right| \\ 
&\ \ \ \ \leq \sum_{D=1}^{X}\left|\frac{1}{\Delta f_{D}}-\frac{1}{\Delta f_{D+1}}\right|+2\pi^{2}k^{2}\sum_{D=1}^{X}\left|\Delta f_{D}\right|+\frac{1}{\left|\Delta f_{D+1}\right|}+\frac{1}{\left|\Delta f_{1}\right|}.
\end{align*}
\end{lem}

We assume that the sequence $(f_{D})_{D=1}^{\infty}$ is increasing and $(\Delta f_{D})_{D=1}^{\infty}$ is decreasing. Hence, Lemma \ref{LemTrig2} implies that for every $k$ we have
\begin{align*}
\left|2\pi i k\sum_{D=1}^{X}e^{2\pi i k f_{D}}\right|\leq &\ \frac{1}{\Delta f_{X+1}}-\frac{1}{\Delta f_{1}} + 2\pi^{2}k^{2}(f_{X+1}-f_{1})+\frac{1}{\Delta f_{X+1}}+\frac{1}{\Delta f_{1}}\\
 <&\  2\pi^{2}k^{2}f_{X+1} +\frac{2}{\Delta f_{X+1}}.
\end{align*}
Hence
\begin{align*}
\frac{1}{k}\left|\sum_{D=1}^{X}e^{2\pi i k f_{D}}\right| \leq \frac{1}{2\pi k^{2}}\left(2\pi^{2}k^{2}f_{X+1} +\frac{2}{\Delta f_{X+1}}\right)=\pi f_{X+1} + \frac{1}{k^{2}}\cdot\frac{1}{\pi \Delta f_{X+1}}.
\end{align*}

Let us plug in the above inequality into Lemma \ref{LemTrig1},
\begin{align*}
\big| D(X,[a,b]) \big| & <\frac{X}{K+1}+3\sum_{k=1}^{K}\left(\pi f_{X+1} + \frac{1}{k^{2}}\cdot\frac{1}{\pi \Delta f_{X+1}}\right)\\
& = \frac{X}{K+1}+3\pi f_{X+1} K + \frac{3}{\pi \Delta f_{X+1}}\sum_{k=1}^{K}\frac{1}{k^{2}} \\
 & < \frac{X}{K+1} + 3\pi f_{X+1} K +\frac{\pi}{2}\cdot\frac{1}{\Delta f_{X+1}}.
\end{align*}
The above inequality holds for every positive integer $K$. In particular, we can take $K=\lfloor X^{1/2}/f_{X+1}^{1/2}\rfloor$. Then
\begin{align*}
\big| D(X,[a,b]) \big|<\frac{X}{\lfloor X^{1/2}/f_{X+1}^{1/2}\rfloor+1}+3\pi \lfloor X^{1/2}/f_{X+1}^{1/2}\rfloor\sqrt{X} + \frac{\pi}{2}\cdot\frac{1}{\Delta f_{X+1}} \\
< (3\pi +1) X^{1/2}f_{X+1}^{1/2} + \frac{\pi}{2}\cdot\frac{1}{\Delta f_{X+1}},
\end{align*}
finishing the proof of Theorem \ref{StepIII}.
\end{proof}

\subsection*{Step IV} We are ready to prove the main statement.

\begin{proof}[Proof of Theorem $\ref{ThmBound}$]
Let us fix  numbers $n$ and $L$ and let $F_{\varphi}^{(a_{n},L)}$ be the set from Theorem \ref{StepII}. We have
\begin{align*}
&\frac{\#\big\{\ 1\leq D\leq X\ \big|\  f_{D}\bmod{1} \in F_{\varphi} \ \big\}}{X}\\ 
 \leq\  & \frac{\#\big\{\ 1\leq D\leq X\ \big|\  f_{D}\bmod{1} \in F_{\varphi}^{(a_{n},L)} \ \big\}}{X}.
\end{align*}

Let $I_{1}$, \ldots, $I_{R}$ be disjoint intervals such that $F_{\varphi}^{(a_{n},L)}=\bigcup_{j=1}^{R}I_{j}$. From Theorem \ref{StepIII} we know that for every~$j$,
\begin{align*}
\frac{\#\big\{\ 1\leq D\leq X\ \big|\  f_{D}\bmod{1} \in I_{j} \ \big\}}{X}<\mu (I_{j}) + (3\pi +1)\frac{f_{X+1}^{1/2}}{X^{1/2}} + \frac{\pi}{2} \frac{1}{X\Delta f_{X+1}}.
\end{align*}
By summing all such inequalities over $j$ we get
\begin{align*}
&\frac{\#\big\{\ 1\leq D\leq X\ \big|\  f_{D}\bmod{1} \in F_{\varphi}^{(a_{n},L)} \ \big\}}{X}\\ 
&\ \ \ \leq \mu\left(F_{\varphi}^{(a_{n},L)}\right)+\left((3\pi +1)\frac{f_{X+1}^{1/2}}{X^{1/2}} + \frac{\pi}{2}\cdot \frac{1}{X\Delta f_{X+1}}\right)R.
\end{align*}
The latter expression is bounded from above by
\begin{align*}
\prod_{j=1}^{n} & \left(1-\frac{1}{3(\varphi (j)+2)}\right)  +\frac{2(n-1)}{L} \\ 
& + \left((3\pi +1)\frac{f_{X+1}^{1/2}}{X^{1/2}}  + \frac{\pi}{2}\cdot \frac{1}{X\Delta f_{X+1}}\right)\\
&\cdot \left[ \sum_{j=1}^{n}(n-j+1)\varphi (a_{j}) + \left(\sum_{t=1}^{n}a_{t}-\frac{n(n+1)}{2}\right)L\right].
\end{align*}
The result follows.
\end{proof}

We can specialize Theorem \ref{ThmBound} to the case of the sequence $(\xi_{D})_{4\nmid D}$ with odd coefficients bounded by a fixed constant. Recall that
\begin{align*}
\xi_{D}:=\begin{cases}\sqrt{D} & \text{when }D \eq 2,3 \pmod 4, \\
\frac{1+\sqrt{D}}{2} & \text{when }D \eq 1 \pmod 4. \end{cases}
\end{align*}

\begin{cor}\label{CorBound}
For every  $X, B\geq 2$ satisfying $X\geq B^{12}(\log X)^{4}$, we have
\begin{align*}
\# \big\{\ 1\leq D\leq X \ \big|\  \xi_{D}\bmod 1 = [0;u_{1},u_{2},\ldots ],\ u_{2n-1}\leq B \textrm{ for all } n \ \big\}\\
 < 100 B^{3/2} (\log X)^{3/2} X^{7/8}.
\end{align*}
\end{cor}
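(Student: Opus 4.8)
The plan is to deduce Corollary \ref{CorBound} by a direct specialization of Theorem \ref{ThmBound}, optimizing the free parameters $n$ and $L$ against the given constraint $X \geq B^{12}(\log X)^4$. First I would set $f_D := \xi_D$, so that $f_D \bmod 1$ is $\sqrt D \bmod 1$ when $D \equiv 2,3 \pmod 4$ and $(1+\sqrt D)/2 \bmod 1$ when $D\equiv 1\pmod 4$. This sequence (restricted to $4\nmid D$, or extended in the obvious monotone way) is increasing with decreasing difference sequence, since $\xi_{D+1}-\xi_D \asymp 1/\sqrt D$ is decreasing; one also checks $f_{X+1} \ll \sqrt X$ and $1/(f_{X+2}-f_{X+1}) = 1/\Delta f_{X+1} \ll \sqrt X$. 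For the arithmetic progression constraint ``$u_{2n-1}\le B$ for all $n$'' one takes $\varphi(j) = B$ for $j$ odd and $\varphi(j)=\infty$ for $j$ even; then the sequence $(a_n)$ of indices where $\varphi$ is finite is exactly $a_n = 2n-1$, so $a_1 = 1$ (needed for the hypothesis $\varphi(1)\neq\infty$), $\sum_{j=1}^n a_j = n^2$, and $\varphi(a_j) = B$ for all $j$. The series condition $\sum 1/\varphi(n) = \infty$ holds because the odd-indexed terms contribute $\sum 1/B = \infty$.

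With these substitutions, Theorem \ref{ThmBound} bounds the counting function by
\begin{align*}
X\left[\left(1-\frac{1}{3(B+2)}\right)^n + \frac{2(n-1)}{L}\right] + C\sqrt{X}\left[\frac{nL+n(n+1)/2\cdot L + n(n+1)B/2 }{1}\right],
\end{align*}
where I have used $\sum_{j=1}^n (n-j+1)\varphi(a_j) = B\sum_{j=1}^n(n-j+1) = B\,n(n+1)/2$ and $\sum_{j=1}^n a_j - n(n+1)/2 = n^2 - n(n+1)/2 = n(n-1)/2$, and where $C$ absorbs the constant $3\pi+1$ together with the bounds $\sqrt{f_{X+1}} \ll X^{1/4}$ and $1/\Delta f_{X+1} \ll \sqrt X$ (so the prefactor $(3\pi+1)X^{1/2}f_{X+1}^{1/2} + \tfrac\pi2/\Delta f_{X+1}$ is $O(X^{3/4})$). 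Thus schematically the bound has shape
\begin{align*}
X\Bigl(1-\tfrac{1}{3(B+2)}\Bigr)^{n} + \frac{nX}{L} + X^{3/4}\bigl(n^2 L + n^2 B\bigr),
\end{align*}
up to absolute constants.

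Now I would choose the parameters to balance these three error terms. Using $(1-\tfrac{1}{3(B+2)})^n \le e^{-n/(3(B+2))} \le e^{-n/(9B)}$, the first term is made $\le X^{7/8}$ (say) by taking $n \asymp B\log X$. With $n \asymp B\log X$ fixed, the second term $nX/L$ is controlled by choosing $L$ as large as the third term permits: the third term is $\asymp X^{3/4} n^2 L \asymp X^{3/4} B^2 (\log X)^2 L$, and we want this $\le X^{7/8}$, i.e. $L \lesssim X^{1/8}/(B^2(\log X)^2)$. Taking $L \asymp X^{1/8}/(B^2(\log X)^2)$ then gives $nX/L \asymp B\log X \cdot X \cdot B^2(\log X)^2 / X^{1/8} = B^3 (\log X)^3 X^{7/8}$, which is the dominant contribution and matches the claimed shape $100 B^{3/2}(\log X)^{3/2}X^{7/8}$ --- wait, this is where the constraint $X \ge B^{12}(\log X)^4$ must be used to sharpen things: it guarantees $L \ge 1$ (indeed $X^{1/8} \ge B^{3/2}(\log X)^{1/2} \ge B^2(\log X)^2$ fails in general, so one actually takes $L$ somewhat smaller, e.g. $L \asymp X^{1/8}/(B^{3/2}(\log X)^{5/2})$, and rechecks), and it lets one absorb the $X^{3/4}$-term and the genuine main term into a single clean bound. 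The honest bookkeeping is: pick $n = \lceil 9B\log X\rceil$ and $L = \lfloor X^{1/8}/(B^{3/2}(\log X)^{5/2})\rfloor$, verify $L \ge 1$ from $X \ge B^{12}(\log X)^4$ (which gives $X^{1/8} \ge B^{3/2}(\log X)^{1/2}$, enough once one is slightly careful about the log powers), substitute, and check each of the three terms is at most $\tfrac{100}{3}B^{3/2}(\log X)^{3/2}X^{7/8}$.

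The main obstacle is the parameter optimization combined with making every implied constant explicit enough to reach the stated numerical constant $100$: the three error terms scale differently in $B$ ($B^0$, $B^0$ via $n/L$ which hides $B^{3}$ after substitution, and $B^2$), so one has to be genuinely careful that the choice of $n$ and $L$ simultaneously kills the exponential term, keeps $L\ge 1$ under the hypothesis $X\ge B^{12}(\log X)^4$, and yields the $B^{3/2}$ (not $B^3$) dependence claimed --- which forces the slightly unbalanced choice of $L$ above and a correspondingly careful estimate of $nX/L$ against $X^{7/8}$. A secondary, more routine obstacle is pinning down the explicit constants in $f_{X+1} \le cX$ and $1/\Delta f_{X+1} \le c\sqrt X$ for $\xi_D$ in both residue cases $D\equiv 2,3$ and $D\equiv 1 \pmod 4$; these are elementary estimates on $\sqrt{D+1}-\sqrt D$ but must be done cleanly since they feed into $C$. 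Once these are in hand, the corollary follows by assembling the three bounds and using $X\ge B^{12}(\log X)^4$ one last time to merge the $X^{3/4}$-order term into the $X^{7/8}$-order main term.
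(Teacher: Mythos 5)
Your overall strategy --- specialize Theorem~\ref{ThmBound} with $\varphi(j)=B$ for odd $j$, $\varphi(j)=\infty$ for even $j$, compute the combinatorial sums (which you do correctly), and then optimize $n$ and $L$ --- is indeed the paper's approach. But there are two genuine gaps.

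First, you cannot feed the single sequence $f_D:=\xi_D$ into Theorem~\ref{ThmBound}, because $(\xi_D)$ is not increasing even after restricting to $4\nmid D$: for instance $\xi_3=\sqrt3\approx1.73 > \xi_5=\tfrac{1+\sqrt5}{2}\approx1.62$. Saying ``extended in the obvious monotone way'' does not fix this --- the theorem's hypotheses (increasing $f_D$, decreasing $\Delta f_D$) genuinely fail. The paper sidesteps this by bounding the count for $\xi_D$ by the sum of the corresponding counts for the two monotone sequences $f_D=\sqrt D$ and $f_D=\tfrac{1+\sqrt D}{2}$ (each over \emph{all} $D\le X$), and applying Theorem~\ref{ThmBound} separately to each.

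Second, and more seriously, your final parameter choice $L \asymp X^{1/8}/(B^{3/2}(\log X)^{5/2})$ does not produce the claimed bound. With $n\asymp B\log X$, the middle term becomes
\begin{align*}
\frac{nX}{L}\asymp B\log X\cdot X\cdot\frac{B^{3/2}(\log X)^{5/2}}{X^{1/8}} \asymp B^{5/2}(\log X)^{7/2}X^{7/8},
\end{align*}
which exceeds $B^{3/2}(\log X)^{3/2}X^{7/8}$ by a factor of $B(\log X)^2$. The error in the reasoning is demanding that the third term $X^{3/4}n^2L$ alone be $\le X^{7/8}$; one should instead \emph{balance} the two competing error terms $nX/L$ and $X^{3/4}n^2L$, which gives $L\asymp X^{1/8}/\sqrt{n}\asymp X^{1/8}/(B\log X)^{1/2}$ --- this is the paper's choice, and it makes both terms $\asymp B^{3/2}(\log X)^{3/2}X^{7/8}$. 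Moreover the constraint you actually need is $L\ge B$ (not $L\ge1$), because the paper uses $B\le L$ to absorb $\frac{n(n+1)}{2}B$ into $\frac{n(n-1)}{2}L$ and obtain the clean term $n^2L$; with the correct $L=X^{1/8}(B\log X)^{-1/2}$, the condition $L\ge B$ is precisely equivalent to $X\ge B^{12}(\log X)^4$. Your smaller $L$ would require a stronger hypothesis and is not even guaranteed to satisfy $L\ge 1$ under the stated assumption.
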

\begin{proof}
We have
\begin{align*}
& \#  \big\{\ 1\leq D\leq X \ \big|\  \xi_{D}\bmod 1 = [0;u_{1},u_{2},\ldots ],\ u_{2n-1}\leq B \textrm{ for all } n \ \big\} \\
 & \leq \# \big\{\ 1\leq D\leq X \ \big|\  \sqrt{D}\bmod 1 = [0;u_{1},u_{2},\ldots ],\ u_{2n-1}\leq B \textrm{ for all } n \ \big\} \\
 & + \# \big\{\ 1\leq D\leq X \ \big|\  \frac{1+\sqrt{D}}{2}\bmod 1 = [0;u_{1},u_{2},\ldots ],\ u_{2n-1}\leq B \textrm{ for all } n \ \big\}.
\end{align*}
We bound the two summands above separately. The idea is simple: we just use Theorem \ref{ThmBound} twice. Once with $f_{D}:=\sqrt{D}$ and then with $f_{D}:=\frac{1+\sqrt{D}}{2}$. 

At first, let us use Theorem \ref{ThmBound} with $f_{D}:=\sqrt{D}$ for all $D$, and
\begin{align*}
\varphi (n)=\left\{\begin{array}{ll}
B, & 2\nmid n, \\
\infty, & 2\mid n.
\end{array}\right.
\end{align*}

Note that we indeed can take $f_{D}=\sqrt{D}$. Of course, it is increasing. Moreover,
\begin{align*}
\Delta f_{D}=\sqrt{D+1}-\sqrt{D}=\frac{1}{\sqrt{D+1}+\sqrt{D}}>\frac{1}{\sqrt{D+2}+\sqrt{D+1}}\\
=\sqrt{D+2}-\sqrt{D+1}=\Delta f_{D+1},
\end{align*}
so the sequence $(\Delta f_{D})_{D=1}^{\infty}$ is decreasing.

Since $X\geq 1$, we get
\begin{align*}
(3\pi +1)X^{1/2}f_{X+1}^{1/2}  +\frac{\pi}{2}\cdot\frac{1}{f_{X+2}-f_{X+1}} \\
 =(3\pi +1)X^{1/2}(X+1)^{1/4}+\frac{\pi}{2}\left((X+2)^{1/2}+(X+1)^{1/2}\right) 
& < 18X^{3/4}.
\end{align*}

For our choice of $f_{D}$ and $\varphi$ we also have $a_{n}=2n-1$ and $\varphi (a_{n})=B$ for all $n$. Hence, we get for all $X$, $L$ and $n$,
\begin{align*}
&\#  \big\{\ 1\leq D\leq X \ \big|\  \sqrt{D}\bmod{1} = [0;u_{1},u_{2},\ldots ],\ u_{2n-1}\leq B \textrm{ for all } n \ \big\} \\  & < X\left[\left(1-\frac{1}{3(B+2)}\right)^{n}+\frac{2(n-1)}{L}\right] + 18 X^{3/4}\left[\frac{n(n+1)}{2}B+\frac{n(n-1)}{2}L\right].
\end{align*}

By performing analogous computations for $f_{D}:=\frac{1+\sqrt{D}}{2}$ and the same choice of $\varphi$ as above we get
\begin{align*}
& \# \bigg\{\ 1\leq D\leq X \ \bigg|\  \frac{1+\sqrt{D}}{2}\bmod{1} = [0;u_{1},u_{2},\ldots ],\ u_{2n-1}\leq B \textrm{ for all } n \ \bigg\} \\  
& < X\left[\left(1-\frac{1}{3(B+2)}\right)^{n}+\frac{2(n-1)}{L}\right] + 22 X^{3/4}\left[\frac{n(n+1)}{2}B+\frac{n(n-1)}{2}L\right].
\end{align*}

Therefore, the quantity from the statement is bounded from above by
\begin{align}\label{IneqBound1}
2X\left[\left(1-\frac{1}{3(B+2)}\right)^{n}+\frac{2(n-1)}{L}\right] + 40 X^{3/4}\left[\frac{n(n+1)}{2}B+\frac{n(n-1)}{2}L\right].
\end{align}

Using inequalities $1-x\leq e^{-x}$, $B\geq 2$ and $B\leq L$ (we assume for a while that the last one is satisfied), we get that the last expression is further bounded by
\begin{align*}
2Xe^{-\frac{n}{6B}}+\frac{4(n-1)}{L}X+40X^{3/4}n^{2}L.
\end{align*}

Let us choose $n=\left\lceil \frac{3}{4}B\log X\right\rceil$ and $L=X^{1/8}\big(B\log X\big)^{-1/2}$. Then the latter expression is less than
\begin{align*}
2X^{7/8}+3B^{3/2}(\log X)^{3/2}X^{7/8}+40\left(\frac{3}{2}\right)^{2} B^{3/2} (\log X)^{3/2}X^{7/8}\\
 < 100 B^{3/2}(\log X)^{3/2}X^{7/8}.
\end{align*}

For the end of the proof observe that the assumption $L\geq B$ is equivalent to $X\geq B^{12}(\log X)^{4}$.
\end{proof}

One can also similarly establish the following version of our bound that is used by Man in his work on multiquadratic fields \cite{Man}.

\begin{cor}%\label{CorBound2}
For every $X, B\geq 2$  satisfying $X > B^{4} (\log X)^{4}$, we have
\begin{align*}
\# \big\{\ 1\leq D\leq X \ \big|\  \xi_{D}\bmod 1 = [0;u_{1},u_{2},\ldots ],\ u_{2n-1}\leq B \textrm{ for all } n \ \big\}\\
 < 50 B^{3/2} (\log X)^{3/2} X^{7/8} + 23 B^{3} (\log X)^{2} X^{3/4}.
\end{align*}
\end{cor}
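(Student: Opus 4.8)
The plan is to mimic the proof of Corollary \ref{CorBound} essentially verbatim, changing only the choice of the parameters $n$ and $L$ at the very end so that the hypothesis $L\geq B$ becomes equivalent to the weaker condition $X>B^4(\log X)^4$ rather than $X\geq B^{12}(\log X)^4$, at the cost of introducing the extra error term $23B^3(\log X)^2 X^{3/4}$. Concretely, I would first split the counting function into the two pieces coming from $\sqrt{D}$ (for $D\equiv 2,3\pmod 4$) and $\frac{1+\sqrt D}{2}$ (for $D\equiv 1\pmod 4$), exactly as in Corollary \ref{CorBound}, and invoke Theorem \ref{ThmBound} with $f_D=\sqrt D$ and then $f_D=\tfrac{1+\sqrt D}{2}$, together with the step function $\varphi(n)=B$ for $n$ odd and $\varphi(n)=\infty$ for $n$ even, so that $a_n=2n-1$ and $\varphi(a_n)=B$. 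The monotonicity checks ($f_D$ increasing, $\Delta f_D$ decreasing) and the bound $(3\pi+1)X^{1/2}f_{X+1}^{1/2}+\tfrac{\pi}{2}(f_{X+2}-f_{X+1})^{-1}<18X^{3/4}$ (resp.\ $<22X^{3/4}$ in the second case) are identical to before, so summing the two contributions gives the same master bound \eqref{IneqBound1}, namely
\begin{align*}
2X\left[\left(1-\frac{1}{3(B+2)}\right)^{n}+\frac{2(n-1)}{L}\right] + 40 X^{3/4}\left[\frac{n(n+1)}{2}B+\frac{n(n-1)}{2}L\right].
\end{align*}

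The second step is the parameter optimization. Using $1-x\le e^{-x}$, $B\ge 2$, and provisionally $B\le L$, the expression above is at most $2Xe^{-n/(6B)}+\tfrac{4(n-1)}{L}X+40X^{3/4}n^2 L$, just as in Corollary \ref{CorBound}. Now instead of the earlier choice I would take $n=\lceil\tfrac34 B\log X\rceil$ and $L=\max\{B,\ X^{1/8}(B\log X)^{-1/2}\}$, or more simply just analyze the two regimes directly. The point is that $e^{-n/(6B)}\le X^{-1/8}$ forces the first term to be $O(X^{7/8})$ regardless; the middle term is $\tfrac{4(n-1)}{L}X = O(B(\log X)/L \cdot X)$, and the last term is $O(X^{3/4}(B\log X)^2 L)$. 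Balancing the middle and last terms suggests $L\asymp X^{1/8}(B\log X)^{-1/2}$, which is exactly the earlier choice and gives the $B^{3/2}(\log X)^{3/2}X^{7/8}$ bound — but that choice only satisfies $L\ge B$ when $X\ge B^{12}(\log X)^4$. To cover the larger range $X>B^4(\log X)^4$ I instead take $L=B$ whenever $X^{1/8}(B\log X)^{-1/2}<B$, i.e.\ whenever $B^{12}(\log X)^4>X$. With $L=B$ the middle term becomes $\tfrac{4(n-1)}{B}X=O((\log X)X)$ — wait, that is too big; so in fact one must be more careful and keep $L$ as large as allowed while still $\le$ something that makes the $X^{3/4}$-term acceptable. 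The correct move is: take $n=\lceil\tfrac34 B\log X\rceil$ and $L=\lceil X^{1/8}(B\log X)^{-1/2}\rceil$ when $X\ge B^{12}(\log X)^4$, giving the pure $X^{7/8}$ bound; and when $B^4(\log X)^4<X<B^{12}(\log X)^4$, take $L=B$, so that the middle term is $\tfrac{4(n-1)}{B}X=O((\log X)X)$, which we re-express using $X<B^{12}(\log X)^4$, hence $X=X^{3/4}\cdot X^{1/4}<X^{3/4}B^3(\log X)$, so $(\log X)X<B^3(\log X)^2 X^{3/4}$ up to constants, and the last term $40X^{3/4}n^2 L\ll X^{3/4}B^3(\log X)^2$ as well. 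Collecting constants yields the claimed $50B^{3/2}(\log X)^{3/2}X^{7/8}+23B^3(\log X)^2 X^{3/4}$.

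In more detail, the cleanest way to present it is to not case-split at all but to simply choose $n=\lceil\tfrac34 B\log X\rceil$ and $L=\max\bigl\{B,\ \lceil X^{1/8}(B\log X)^{-1/2}\rceil\bigr\}$ under the single hypothesis $X>B^4(\log X)^4$ (which guarantees $B\le L$ trivially, and also $L\le X^{1/8}$ since $X>B^4(\log X)^4$ gives $B<X^{1/4}(\log X)^{-1}$, whence $X^{1/8}(B\log X)^{-1/2}>X^{1/8}X^{-1/8}=1$ is consistent and $B\le X^{1/8}$). Then bound each of the three terms $2Xe^{-n/(6B)}$, $\tfrac{4(n-1)}{L}X$, $40X^{3/4}n^2L$ using $L\ge X^{1/8}(B\log X)^{-1/2}$ for the first two and $L\le X^{1/8}(B\log X)^{-1/2}+B\le 2\max\{\cdots\}$ for the third, splitting the resulting bound for the third term into the contribution from $X^{1/8}(B\log X)^{-1/2}$ (which produces the $X^{7/8}$ term) and from $B$ (which produces $X^{3/4}n^2B=O(X^{3/4}B^3(\log X)^2)$). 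Tidying the numerical constants — $40\cdot(3/2)^2=90$, etc., and absorbing the $2X^{7/8}$ and the two $\tfrac34$-rounding terms — gives coefficients below $50$ and $23$ respectively. The main obstacle is purely bookkeeping: verifying that under the weaker hypothesis $X>B^4(\log X)^4$ the choice of $L$ still satisfies $B\le L\le X^{1/8}$ (needed so that Theorem \ref{ThmBound} applies and the intermediate estimates go through) and that the constants genuinely come out below the stated $50$ and $23$; there is no new idea beyond Corollary \ref{CorBound}.
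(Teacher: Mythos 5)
Your proposal correctly identifies that the difference from Corollary~\ref{CorBound} is entirely in the treatment of the pair $(B,L)$ after reaching \eqref{IneqBound1}, but the device you use differs from the paper's and, as written, does not deliver the stated constants. The paper keeps the \emph{same} $L=X^{1/8}(B\log X)^{-1/2}$ as before and simply \emph{drops} the assumption $B\le L$: instead of merging $40X^{3/4}\bigl[\tfrac{n(n+1)}{2}B+\tfrac{n(n-1)}{2}L\bigr]$ into $40X^{3/4}n^2L$, it leaves the two pieces separate, yielding $2Xe^{-n/(6B)}+\tfrac{4(n-1)}{L}X+20X^{3/4}n^2L+40X^{3/4}n^2B$, and the only requirement on $L$ becomes $L>1$, i.e.\ $X>B^4(\log X)^4$. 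You instead inflate $L$ to $\max\{B,X^{1/8}(B\log X)^{-1/2}\}$ so as to \emph{retain} $B\le L$, merge to $40X^{3/4}n^2L$, and only then split $L\le B+X^{1/8}(B\log X)^{-1/2}$. This loses a factor of $2$ in the $X^{7/8}$-piece (you get $40n^2\cdot X^{1/8}(B\log X)^{-1/2}$ where the paper has $20n^2\cdot X^{1/8}(B\log X)^{-1/2}$), and in the regime $X\ge B^{12}(\log X)^4$ your plan simply reproduces the Corollary~\ref{CorBound} bound, which has leading constant near $95$, not below $50$.

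Concretely, the constant bookkeeping does not close as you assert. With $n\le\tfrac32 B\log X$ one has $40X^{3/4}n^2\cdot X^{1/8}(B\log X)^{-1/2}\le 90\,B^{3/2}(\log X)^{3/2}X^{7/8}$, so your $X^{7/8}$ coefficient is at least $2+3+90=95>50$; the case-split version fares no better, since for $X\ge B^{12}(\log X)^4$ there is no $X^{3/4}$-term available to absorb the excess, and for the intermediate range $L=B$ produces a $40X^{3/4}n^2B\le 90B^3(\log X)^2X^{3/4}$ term, exceeding $23$. (You also claim $B\le X^{1/8}$ follows from $X>B^4(\log X)^4$; this is false for large $X$, but it is also unnecessary since Theorem~\ref{ThmBound} imposes no upper bound on $L$.) So the gap is real: you need the paper's sharper decomposition --- keep $B$ and $L$ separate from the start, getting the coefficient $20$ rather than $40$ on the $n^2L$ term --- and then the stated $50$ and $23$ come out with the same $n,L$ as in Corollary~\ref{CorBound} once one estimates $n^2$ by roughly $\tfrac{9}{16}(B\log X)^2$ rather than $\tfrac94(B\log X)^2$.
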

\begin{proof}
The proof is analogous to the proof of Corollary \ref{CorBound}. We get the bound \eqref{IneqBound1} and again use the inequalities $1-x\leq e^{-x}$ and $B\geq 2$, but do not assume that $B\leq L$. Therefore, the quantity from the statement is bounded from above by
\begin{align*}
2X e^{-\frac{n}{6B}} + \frac{4(n-1)}{L}X + 20 X^{3/4}n^{2} L + 40 X^{3/4} n^{2} B.
\end{align*}
Again, we take $n=\left\lceil \frac{3}{4}B\log X\right\rceil$ and $L=X^{1/8}\big(B\log X\big)^{-1/2}$ and get further bounds:
\begin{align*}
2X^{7/8} & + 3 B^{1/2} (\log X)^{3/2} X^{7/8} + 45 B^{3/2} (\log X)^{3/2} X^{7/8}  + 23 B^{3} (\log X)^{2} X^{3/4} \\
& < 50 B^{3/2} (\log X)^{3/2} X^{7/8} + 23 B^{3} (\log X)^{2}X^{3/4}.
\end{align*}
The only condition we need to check is $L>1$. This is equivalent to $X > B^{4} (\log X)^{4}$ and hence the result follows.
\end{proof}

\section{Short vectors in $\Z $-lattices}\label{sec:theta est}

In this section, we will establish  
an upper bound for the number of vectors of a given norm in a quadratic $\Z$-lattice.
While such results are very well-known (e.g.  \cite[Lemma 4.1(b)]{Bl}, \cite[Theorem 20.9]{IwanKowal}), we provide a proof for completeness and since we could not find an appropriate explicit statement in the literature (however, a better statement than ours was proved by Regev and Stephens-Davidowitz \cite[Theorem 1.1]{RS} after our paper was finished; see below). 

First we need to introduce some definitions. Let $V$ be an $r $-dimensional vector space over $\Q$ equipped with a symmetric bilinear form $\mathfrak{B}:V\times V\rightarrow \Q$. Let $Q(v)=\mathfrak{B}(v,v)$ for  $v \in V$. A \textit{quadratic $\Z $-lattice} $\Lambda\subset  V$ is a $\Z $-submodule such that $\Q\Lambda = V$; $\Lambda$ is \textit{classical} if $\mathfrak{B}(v,w)\in\Z$ for all $v,w\in\Lambda$, and \textit{positive definite} if $Q(v)>0$ for all $0\neq v\in\Lambda$. 

For every quadratic form $Q$ over $\Q$ there is a corresponding quadratic $\Z $-lattice $(\Z^r, Q)$, where $r$ is the number of variables of $Q$. The associated \textit{Gram matrix} is defined as $G:=(\mathfrak{B}(e_i,e_j))$, where $e_i$ are the elementary vectors in $\R^r$ (or any other basis of the lattice). $\Gamma(x)$ denotes the usual \textit{gamma function}.

\begin{thm}\label{thm:short vectors}
	Let $\Lambda$ be a classical positive definite 
	$\Z $-lattice of rank $r$, $G$ the Gram matrix of a basis of $\Lambda$ and $n\geq 1$ be an integer. Let $N(n)$ denote the number of vectors of norm $n$ in $\Lambda$, i.e., of elements $v\in \Lambda$ such that $Q(v)=n$. Then
	\begin{align*}
	N(n) \leq C(r,n),	
	\end{align*}
where $C(r,n)$ is defined as follows:
\begin{align*}
	C(r,n):=\begin{cases}
		2r & \textrm{if } n=1, \\
		\max\{480, 2r(r-1)\} & \textrm{if } n=2, \\
		\frac{\pi^{\frac{r}{2}}}{\Gamma\left(\frac{r}{2}+1\right)} \frac{n^{\frac{r}{2}}}{\sqrt{\det G}} + \sum_{m=0}^{r-1}\binom{r}{m}\frac{\pi^{\frac{m}{2}}}{\Gamma\left(\frac{m}{2}+1\right)}n^{\frac{m}{2}} & \textrm{if } n\geq 3.
	\end{cases}
\end{align*}

Moreover, for all $r\geq 3$ and $n\ge 3$ we have
\begin{align*}
	C(r,n)\leq \frac{\pi^{\frac{r}{2}}}{\Gamma\left(\frac{r}{2}+1\right)} \frac{n^{\frac{r}{2}}}{\sqrt{\det G}} + \left(\frac{r\pi^{\frac{r-1}{2}}}{\Gamma\left(\frac{r+1}{2}\right)}+\frac{e^{330} (0.9)^{r}}{\sqrt{n}}\right) n^{\frac{r-1}{2}}.
\end{align*}

For further use, let us  define  $$B(r,n):=\frac{1}{2}C(2r,n).$$
\end{thm}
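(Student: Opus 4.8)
The plan is to separate the three cases $n=1$, $n=2$, and $n\geq 3$, then handle the final "moreover" bound by an elementary estimate on the error term. The cases $n=1,2$ rest on the theory of kissing numbers. For $n=1$: a positive definite classical lattice $\Lambda$ has a sublattice spanned by norm-$1$ vectors which is orthonormal (two norm-$1$ vectors $v,w$ with $|\mathfrak{B}(v,w)|\geq 1$ and $\mathfrak{B}(v,v)=\mathfrak{B}(w,w)=1$ force $v=\pm w$ by Cauchy--Schwarz, since $\mathfrak{B}(v,w)$ is an integer of absolute value $\le 1$ with equality only for $w=\pm v$); hence there are at most $r$ such vectors up to sign, so $N(1)\leq 2r$. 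For $n=2$: if $v,w$ are distinct norm-$2$ vectors with $w\neq \pm v$, then $\mathfrak{B}(v,w)\in\{-1,0,1\}$, so the norm-$2$ vectors (rescaled) either form part of a root system of type $A$, $D$, or $E$ inside $\R^r$, or lie in a lattice where all inner products are $0$ — in any case their number is bounded. The cleanest route is: the set of norm-$2$ vectors forms a root system $R$ (finite, reduced) in the span $W$ of dimension $s\le r$; the number of roots in an irreducible root system of rank $s$ is at most $s(s-1)$ for type $A_s, D_s$ and at most $2\cdot 120=240$ for the exceptional types and small cases, and summing over irreducible components and using convexity of $s\mapsto s(s-1)$ gives $N(2)\le \max\{480, 2r(r-1)\}$ (the constant $480=2\cdot 240$ from $E_8$ appearing doubled, or from two copies of an exceptional factor). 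I would check the small-rank numerology carefully here since that is where the explicit constant $480$ must be pinned down.

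For $n\geq 3$, the argument is the classical lattice-point counting via volume comparison. Embed $\Lambda$ as a lattice in $\R^r$ with Gram matrix $G$, so $\Lambda$ has covolume $\sqrt{\det G}$ and the norm form $Q$ is the squared Euclidean length. The vectors of norm $n$ lie on the sphere of radius $\sqrt n$. Associate to each lattice point $x$ with $Q(x)\le n$ the translate $x+P$ of a fundamental parallelepiped $P$ of $\Lambda$; these translates are disjoint and their union is contained in a slightly enlarged ball. The standard packing inequality then gives
\begin{align*}
\#\{x\in\Lambda : Q(x)\le n\}\cdot \sqrt{\det G}\le \Vol\!\left(B(0,\sqrt n + \rho)\right),
\end{align*}
where $\rho$ is the diameter of $P$; but to get a clean explicit error one instead counts $N(n)$ directly by noting every norm-$n$ point contributes a unit cell inside the shell of radii $\sqrt{n-?}$ to $\sqrt{n+?}$, or — the slickest way — uses the expansion of the ball volume: $N(n)$ is at most the number of lattice points in the annulus, bounded by $\Vol(B(0,\sqrt n))/\sqrt{\det G}$ plus a boundary term. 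The boundary term I would control by the combinatorial device of reducing mod the lattice in each coordinate: writing $x=(x_1,\dots,x_r)$, the number of norm-$n$ vectors with a prescribed set of coordinates forced to their extreme values is bounded by the volume of a lower-dimensional ball, giving exactly the sum $\sum_{m=0}^{r-1}\binom{r}{m}\frac{\pi^{m/2}}{\Gamma(m/2+1)}n^{m/2}$. This is the main technical step: producing that \emph{explicit} binomial error rather than an $O(n^{(r-1)/2})$.

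Finally, for the "moreover" inequality with $r\geq 3$, $n\geq 3$, I bound the tail $\sum_{m=0}^{r-1}\binom{r}{m}\frac{\pi^{m/2}}{\Gamma(m/2+1)}n^{m/2}$. Split off the top term $m=r-1$, which is $\frac{r\pi^{(r-1)/2}}{\Gamma((r+1)/2)}n^{(r-1)/2}$, exactly the first piece of the claimed error. For the remaining terms $m\le r-2$, factor out $n^{(r-1)/2}$, so each contributes $\binom{r}{m}\frac{\pi^{m/2}}{\Gamma(m/2+1)} n^{(m-r+1)/2}\le \binom{r}{m}\frac{\pi^{m/2}}{\Gamma(m/2+1)} n^{-1/2}$ since $m-r+1\le -1$ and $n\ge 1$; thus the whole remaining sum is at most $\frac{1}{\sqrt n}\sum_{m=0}^{r-2}\binom{r}{m}\frac{\pi^{m/2}}{\Gamma(m/2+1)}$. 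It then suffices to show $\sum_{m=0}^{r}\binom{r}{m}\frac{\pi^{m/2}}{\Gamma(m/2+1)}\le e^{330}(0.9)^r$ for $r\ge 3$ (absorbing the two extra top terms harmlessly). Recognizing $\sum_{m\ge 0}\binom{r}{m}\frac{\pi^{m/2}}{\Gamma(m/2+1)}t^m$ as related to evaluating a product, or more crudely bounding $\frac{\pi^{m/2}}{\Gamma(m/2+1)}$ by its maximum over $m$ (which is bounded by an absolute constant, attained near $m\approx \pi e$) times $2^r$, is too lossy; instead I would use $\sum_m \binom{r}{m} c^m = (1+c)^r$ with $c$ chosen so that $\frac{\pi^{m/2}}{\Gamma(m/2+1)}\le C_0 c^m$ uniformly — but since $\frac{\pi^{m/2}}{\Gamma(m/2+1)}\to 0$ superexponentially, any fixed $c>0$ works with a constant $C_0(c)$, and optimizing forces the crude but safe constants $e^{330}$ and $(0.9)^r$ (the $(1+c)^r$ with $c$ slightly less than $-0.1$ is impossible, so really one writes $\frac{\pi^{m/2}}{\Gamma(m/2+1)}\le C_0\cdot(0.9)^m/\binom{r}{m}$-type bounds term by term for $m$ in the dangerous middle range and sums geometrically). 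The main obstacle is thus purely a matter of verifying these explicit numerical inequalities are actually valid for all $r\ge 3$ with the stated constants; the conceptual content is entirely standard. I would carry out that verification by splitting the sum at $m$ of size $\sim \log r$ and treating small and large $m$ separately.
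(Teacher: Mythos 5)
Your outline for the main bound $N(n)\le C(r,n)$ ($n\ge 3$) is essentially the paper's route: the paper invokes Davenport's theorem \cite{Da} (the Lipschitz counting principle), which gives $N(n)\le \Vol(\mathcal{R})+\sum_{m=0}^{r-1}V_m$ where $V_m$ sums the $m$-dimensional volumes of the coordinate projections of $\mathcal R=\{Q\le n\}$; since $\Lambda$ is classical each $m$-dimensional section has Gram determinant $\ge 1$, which yields exactly your binomial sum. Your $n=1$ argument is correct and is the standard one; for $n=2$ the paper just cites Martinet's classification, whereas your root-system sketch is in the right spirit but contains misstatements (e.g.\ $A_s$ has $s(s+1)$ roots and $D_s$ has $2s(s-1)$, not $s(s-1)$), which you rightly flag as needing care.

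The real problem is your treatment of the ``moreover'' estimate. After peeling off the $m=r-1$ term, you bound $n^{(m-r+1)/2}\le n^{-1/2}$ for every $m\le r-2$, and then claim it suffices to prove
\begin{align*}
\sum_{m=0}^{r}\binom{r}{m}\frac{\pi^{m/2}}{\Gamma(m/2+1)}\le e^{330}(0.9)^r.
\end{align*}
This inequality is \emph{false} for large $r$: already the single term $m=5$ contributes $\binom{r}{5}\cdot\frac{8\pi^2}{15}\gtrsim r^5/23$, which grows without bound, while the right-hand side tends to $0$. (Numerically the claimed inequality fails already around $r\approx 1000$ and catastrophically for $r\ge 2000$.) You sensed the difficulty when you noted that any attempt of the form $\sum_m\binom{r}{m}c^m=(1+c)^r$ cannot produce a base $<1$, but the proposed remedy (term-by-term geometric bounds in a middle range, splitting at $m\sim\log r$) cannot rescue it because there is no true inequality to prove. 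The missing idea is to \emph{retain} the factor $n^{-(r-m-2)/2}$ rather than discarding it: since the ``moreover'' statement assumes $n\ge 3$, this factor is $\le 3^{-(r-m-2)/2}$, i.e.\ roughly $(1/\sqrt3)^{r-m}$, and this exponential decay in $r$ is precisely what allows the resulting sum to be dominated by $e^{330}(0.9)^r$. The paper keeps this factor, rewrites it as $3\cdot 3^{-r/2}(3\pi)^{m/2}/\pi^{m/2}\cdots$ and controls the pieces $m\ge 98$ via $s!>(s/e)^s$ and $m\le 97$ via $r^m/e_1^r\le m^m$ with $e_1=e^{1/e}$, eventually arriving at the stated constants. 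Without that surviving $3^{-r/2}$ there is nothing to compensate the polynomial growth of $\binom{r}{m}$ and your bound cannot be established.
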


Alternatively, by the very recent result of Regev and Stephens-Davidowitz \cite[Theorem 1.1]{RS}, one can take 
$$C'(r,n):=2\binom{r+2n-1}{2n-1}-1\text{\ \ \  if \ \ \ }n\geq 3$$
for an upper bound, and then again define $B'(r,n):=\frac{1}{2}C'(2r,n).$

When $n$ is fixed and $r$ varies, then our bound is exponential in $r$, whereas the bound of Regev and Stephens-Davidowitz is much better, as it is polynomial in $r$. Conversely, when we fix $r$ and let $n$ vary, then our bound grows as $n^{r/2}$, whereas the bound of \cite{RS} grows faster, as $n^r$. Moreover, in the case of $\Lambda=\mathbb{Z}^r$, this bound almost matches the lower bound $\gg n^{\lfloor r/2\rfloor}$ that follows from inequality (4) in \cite{RS}.

\begin{proof}
The cases $n=1$ and $n=2$ are well known thanks to the characterization of lattices spanned by vectors of norm $1,2$  
\cite[Theorem 4.10.6, Proposition 4.10.7]{Ma} (for details of the argument that establishes these bounds, see, e.g., \cite[Section 7.1]{KT}).

We can move to the general case. 
Let
\begin{align*}
\mathcal{R} := \left\{\ \textbf{x}\in\mathbb{R}^{r}\ |\ Q(\textbf{x}) \leq n  \ \right\}.
\end{align*}

A theorem of Davenport \cite{Da}  implies that
\begin{align*}
N(n)\leq \Vol(\mathcal{R}) + \sum_{m=0}^{r-1}V_{m},
\end{align*}
where 
$\Vol (\mathcal{R})$ is the $r$-dimensional volume of $\mathcal{R}$ and 
$V_{m}$ is the sum of $m$-dimensional volumes of the projections of $\mathcal{R}$ on the various coordinate spaces obtained by equating any $r-m$ coordinates to $0$, and $V_{0}=1$ (in \cite[Theorem]{Da}, we take $h=1$, as the set $\mathcal R$ is convex).

Let $\lambda_{1},\ldots ,\lambda_{r}$ be the eigenvalues of the Gram matrix $G$ of a basis of $\Lambda$. Let us consider a linear transformation $\mathcal{A}:\mathbb{R}^{r}\to\mathbb{R}^{r}$ such that $G=\mathcal{A}^{T}J\mathcal{A}$, where $J$ is the Jordan form of $G$. In particular, $|\det\mathcal{A}|=1$. Therefore
\begin{align*}
\Vol\left(\mathcal{R}\right) & = \frac{1}{|\det\mathcal{A}|}\Vol\left(\mathcal{A}(\mathcal{R})\right)\\
& = \Vol\left(\left\{\ (y_{1},\ldots ,y_{r})\in\mathbb{R}^{r} \ \bigg| \ \frac{\lambda_{1}}{n}y_{1}^{2} + \cdots + \frac{\lambda_{r}}{n}y_{r}^{2}\leq 1 \right\}\right) \\
& = \frac{\pi^{\frac{r}{2}}}{\Gamma\left(\frac{r}{2}+1\right)}\prod_{j=1}^{r}\left(\frac{n}{\lambda_{j}}\right)^{\frac{1}{2}} = \frac{\pi^{\frac{r}{2}}}{\Gamma\left(\frac{r}{2}+1\right)} \frac{n^{\frac{r}{2}}}{\sqrt{\det G}}.
\end{align*}
In the above chain of equalities we used the well-known formula for the volume of hyperellipsoid.

One can perform analogous computation for every projection of $\mathcal{R}$ counted by $V_{m}$. Indeed, for each such projection we get a set defined by inequality $Q_{1}(\textbf{x})\leq n$ for some integral positive definite quadratic form $Q_{1}$ obtained from $Q$ by setting the values of some $r-m$ variables to be zero. Thus all the determinants of the corresponding Gram matrices are $\geq 1$ and we get for every $m$:
\begin{align*}
V_{m}\leq \binom{r}{m}\frac{\pi^{\frac{m}{2}}}{\Gamma\left(\frac{m}{2}+1\right)}n^{\frac{m}{2}}.
\end{align*}

Hence, we simply get
\begin{align*}
N(n) \leq \frac{\pi^{\frac{r}{2}}}{\Gamma\left(\frac{r}{2}+1\right)} \frac{n^{\frac{r}{2}}}{\sqrt{\det G}} + \sum_{m=0}^{r-1}\binom{r}{m}\frac{\pi^{\frac{m}{2}}}{\Gamma\left(\frac{m}{2}+1\right)}n^{\frac{m}{2}},
\end{align*}
as claimed.

\medskip

Let us now prove the `Moreover' part. We first write the previously obtained bound for $N(n)$ as
\begin{align*}
C(r,n) & =\frac{\pi^{\frac{r}{2}}}{\Gamma\left(\frac{r}{2}+1\right)} \frac{n^{\frac{r}{2}}}{\sqrt{\det G}}\\
& + \left(\frac{r\pi^{\frac{r-1}{2}}}{\Gamma\left(\frac{r+1}{2}\right)}+\frac{1}{\sqrt{n}}\sum_{m=0}^{r-2}\binom{r}{m}\frac{\pi^{\frac{m}{2}}}{\Gamma\left(\frac{m}{2}+1\right)}n^{-\frac{r-m-2}{2}}\right) n^{\frac{r-1}{2}}.
\end{align*}

We need to deal with the sum inside the brackets. We will use the following well-known bound:
\begin{align*}
s!>\left(\frac{s}{e}\right)^{s},
\end{align*}
which is true for all positive integers $s$. 

At first we consider the case of even numbers $m=2s\geq 98$:
\begin{align*}
\sum_{98\leq 2s\leq r-2}&\binom{r}{2s}\frac{\pi^{s}}{\Gamma\left(s+1\right)}n^{-\frac{r-2s-2}{2}} \\
& \leq 3^{1-\frac{r}{2}} \sum_{98\leq 2s\leq r-2}\binom{r}{2s}\frac{(3\pi)^{s}}{s!} < 3^{1-\frac{r}{2}} \sum_{98\leq 2s\leq r-2} \binom{r}{2s} \frac{(3\pi e)^{s}}{s^{s}} \\
& \leq 3^{1-\frac{r}{2}} \sum_{98\leq 2s\leq r-2} \binom{r}{2s} \frac{(3\pi e)^{s}}{49^{s}} =3^{1-\frac{r}{2}} \sum_{\substack{98\leq m\leq r-2 \\ 2\mid m}} \binom{r}{m}\left(\frac{3\pi e}{49}\right)^{\frac{m}{2}}.
\end{align*}

If $m=2s+1\geq 99$ we similarly get:
\begin{align*}
\sum_{99\leq 2s+1\leq r-2}&\binom{r}{2s+1}\frac{\pi^{s+\frac{1}{2}}}{\Gamma\left(s+\frac{3}{2}\right)}n^{-\frac{r-2s-3}{2}} \\
&  < 3^{1-\frac{r}{2}} \sum_{99\leq 2s+1\leq r-2}\binom{r}{2s+1}\frac{(3\pi)^{s+\frac{1}{2}}}{s!} \\
& < 3^{1-\frac{r}{2}} \sum_{99\leq 2s+1\leq r-2}\binom{r}{2s+1}\frac{(3\pi e)^{s+\frac{1}{2}}}{s^{s}} \\
& \leq 7\cdot 3^{1-\frac{r}{2}} \sum_{99\leq 2s+1\leq r-2}\binom{r}{2s+1}\frac{(3\pi e)^{s+\frac{1}{2}}}{49^{s+\frac{1}{2}}} \\
& = 7\cdot 3^{1-\frac{r}{2}} \sum_{\substack{99\leq m\leq r-2 \\ 2\nmid m}} \binom{r}{m}\left(\frac{3\pi e}{49}\right)^{\frac{m}{2}}.
\end{align*}
We can combine the above estimates and obtain:
\begin{align*}
\sum_{m=98}^{r-2}\binom{r}{m}\frac{\pi^{\frac{m}{2}}}{\Gamma\left(\frac{m}{2}+1\right)}n^{-\frac{r-m-2}{2}} < 7\cdot 3^{1-\frac{r}{2}} \sum_{m=98}^{r-2} \binom{r}{m} \left(\frac{3\pi e}{49}\right)^{\frac{m}{2}}\\
 < 21 \left(\frac{1}{\sqrt{3}}\left(\frac{3\pi e}{49} +1\right)\right)^{r} < 21 (0.9)^{r}.
\end{align*}

We bound the part corresponding to $m\leq 97$ in a different way. Let $e_{1}:=e^{1/e}$. One can check that for every $m$ the following inequality is true: $\frac{r^{m}}{e_{1}^{r}} \leq m^{m}$. Hence, 
\begin{align*}
\sum_{m=0}^{97}&\binom{r}{m}\frac{\pi^{\frac{m}{2}}}{\Gamma\left(\frac{m}{2}+1\right)}n^{-\frac{r-m-2}{2}} \\
& < 3^{1-\frac{r}{2}} \sum_{m=0}^{97}\frac{r^{m}}{m!}\frac{(3\pi)^{\frac{m}{2}}}{\Gamma\left(\frac{m}{2}+1\right)} = 3 \left(\frac{e_{1}}{\sqrt{3}}\right)^{r} \sum_{m=0}^{97}\frac{r^{m}}{e_{1}^{r}}\frac{(3\pi )^{\frac{m}{2}}}{m!} \\
& \leq 3 \left(\frac{e_{1}}{\sqrt{3}}\right)^{r} \sum_{m=0}^{97}\frac{(3\pi)^{\frac{m}{2}} m^{m}}{m!} < 3 \left(\frac{e_{1}}{\sqrt{3}}\right)^{r} \sum_{m=0}^{97}\frac{(3\pi)^{\frac{m}{2}} 97^{m}}{m!} \\
& < 3(0.9)^{r} e^{97\sqrt{3\pi}} < \left(e^{330} - 21\right) (0.9)^{r}.
\end{align*}
The result follows.
\end{proof}

\section{Large ranks of universal forms}\label{sec: large ranks}

Let us start this section by making more precise the definitions that we already used in the Introduction.
Recall that for fixed squarefree $D\in \Z_{>1}$ we denote
\begin{equation}\label{eq:gen}
\xi_D=\begin{cases}\sqrt{D} & \text{when }D \eq 2,3 \pmod 4, \\
\frac{1+\sqrt{D}}{2} & \text{when }D \eq 1 \pmod 4. \end{cases}
\end{equation}

The ring of integers of $H=\Q(\sqrt{D})$ we denote by $\co_H$. Every $\alpha \in \co_H$ can be written as $\alpha=a+b\xi_D$, with $a,b \in \Z$. We say that $\alpha$ is \textit{totally positive}, denoted $\alpha \succ 0$, if both $\alpha$ and its conjugate $\alpha'$ are greater than zero, i.e., $\alpha,\alpha'>0.$ Here $\alpha'=a+b\xi_D'$, where

\begin{equation*}
\xi_D'=\begin{cases}-\xi_D & \text{when }D \eq 2,3 \pmod 4, \\
1-\xi_D & \text{when }D \eq 1 \pmod 4. \end{cases}
\end{equation*}

 The sets of all totally positive numbers and integers are denoted by $H^+$ and $\co_H^+$, respectively. Let $\Tr_{H/\Q}, \Nr_{H/\Q}:H\rightarrow \Q$ denote the \textit{trace} and \textit{norm} maps. Explicitly, $\Tr_{H/\Q}(\alpha)=\alpha+\alpha'$ and $\Nr_{H/\Q}(\alpha)=\alpha\alpha'$. The \textit{codifferent} of $\co_H$ is $\co_H^{\vee}=\{\alpha \in H\mid \Tr_{H/\Q}(\alpha \co_H)\subset \Z\}$.
 
 \medskip
 
We briefly introduce the language of quadratic $\co_{H} $-lattices, which will be used below. Consider an $R $-dimensional vector space $V$ over $H=\Q(\sqrt{D})$ equipped with a symmetric bilinear form $\mathfrak{B}:V\times V\rightarrow H$. Let $Q(v)=\mathfrak{B}(v,v)$ for $v \in V$. A \textit{quadratic $\co_H $-lattice $\Lambda\subset  V$ of rank $R$} is an $\co_H $-submodule such that $H\Lambda = V$. 
When talking about quadratic $\co_H $-lattices, we always mean the pair $(\Lambda,Q)$. 

All our lattices are assumed to be \textit{integral} in the sense that $Q(v)\in\co_H$ for all $v\in\Lambda$.
We are primarily working with lattices that are \textit{classical} in the sense that, moreover, $\mathfrak{B}(v,w)\in\co_H$ for all $v,w\in\Lambda$. When this assumption is not satisfied, then we say that the lattice is \textit{non-classical}; in such a case, $\mathfrak{B}(v,w)\in\frac 12\co_H$.

We say that $(\Lambda,Q)$ is \textit{totally positive definite} if $Q(v)\succ 0$ for every non-zero $v\in L$. Given that we work exclusively with totally positive definite quadratic lattices, we shall refer to them just as quadratic $\co_H$-lattices. A lattice \textit{represents} $\alpha \in \co_H^+$ if  $Q(v)=\alpha$ for some $v\in\Lambda$; if it represents all elements of $\co_H^+$, then it is \textit{universal}. Furthermore, we shall say that a quadratic $\co_H$-lattice is \textit{$m\co_H $-universal} if it represents all elements of $m\co_H^+$, for some fixed $m \in \Z_{\geq 1}$. 

For some more relevant background and interesting results on quadratic lattices, see, e.g., \cite{BC+,CO, CS1, Ea,Ka3,KY1,KKO,Km,O}.

\medskip

Let again $D$ be a squarefree positive integer and 
$\xi_D$ as in \eqref{eq:gen}. 
Let $\xi_D=[u_0; \overline{u_1,\dots, u_s}]$ be the periodic continued fraction.

Recall that Kala and Tinkov\' a proved \cite[Section 7.3]{KT} that every universal lattice over $\co_H$ has rank $>u_{2i+1}/2$ (for any integer $i$). In Section \ref{sec:cont frac}, we established a density zero result for the set of $D$s such that all the coefficients of $\xi_D$  with odd indices are small. From this we will deduce that outside of this density-zero set, lattices that are universal, or just represent all of $m\co_H^+$, must have large ranks. 
For that, it suffices to modify the argument of \cite{KT} using the bound on the number of vectors of a given norm in a $\Z $-lattice from Theorem \ref{thm:short vectors}.

We are now ready to prove the first main theorem of this section.

\begin{thm}\label{thm:large ranks}
	Let $D,m$ be positive integers such that $D>1$ is squarefree, and $H=\Q(\sqrt D)$. Let $\xi_D=[u_0; \overline{u_1,\dots, u_s}]$ be the periodic continued fraction and let $u=\max\{u_{2i+1}|i\geq 0\}$. 
	
	Assume that $\Lambda$ is an $m\co_H$-universal classical quadratic $\co_H $-lattice of rank $R$. Then
	$$u< B(R,m),$$
where $B(R,m)$ is defined in Theorem $\ref{thm:short vectors}$.
\end{thm}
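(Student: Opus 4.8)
The plan is to adapt the argument of Kala–Tinková \cite[Section 7.3]{KT} quantitatively, replacing their qualitative use of "there are only finitely many vectors of small norm" with the explicit bound from Theorem \ref{thm:short vectors}. The starting point is the classical connection, due to \cite{KT} (building on \cite{DS}), between the continued fraction of $\xi_D$ and certain elements of the codifferent $\co_H^\vee$: if $u = u_{2i+1}$ is a coefficient with odd index, then there is a totally positive element $\alpha \in \co_H^\vee$ (coming from the corresponding convergent) whose trace $\Tr_{H/\Q}(\alpha)$ is small — in fact roughly bounded independently of $u$ — while the "size" of $\alpha$ forces any representation $Q(v) = m\alpha'$ (or a related multiple lying in $m\co_H^+$) to involve a vector $v$ whose image in an auxiliary $\Z$-lattice has controlled norm.

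**First I would** make precise the auxiliary $\Z$-lattice construction. Given the classical $\co_H$-lattice $(\Lambda, Q)$ of rank $R$ and a totally positive $\beta \in \co_H$, consider the trace form $v \mapsto \Tr_{H/\Q}(\gamma Q(v))$ for a suitable $\gamma \in \co_H^\vee{}^+$; since $\Lambda$ has rank $R$ over $\co_H$ and $[\H:\Q]=2$, this is a positive definite classical $\Z$-lattice of rank $2R$ — this is exactly why $B(R,m) = \tfrac12 C(2R,m)$ is the relevant quantity. The key point to extract from the \cite{KT} machinery is: choosing $\gamma$ to be the element of $\co_H^\vee$ naturally attached to the relevant step of the continued fraction of $\xi_D$, and using $m\co_H$-universality to represent the element $m\gamma^{-1}$ (which lies in $m\co_H^+$ when $\gamma$ divides appropriately), one obtains a nonzero lattice vector $v$ with $\Tr_{H/\Q}(\gamma Q(v)) = m\Tr_{H/\Q}(1) = 2m$ or, more to the point, a vector of norm $m$ in the trace-$\Z$-lattice. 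One then counts: the number of such vectors of norm $m$ is at most $C(2R,m)$, while the number of distinct elements of $\co_H^+$ that must be represented (one for each large odd-indexed coefficient, scaled appropriately) grows with $u$. Pinning down the precise arithmetic so that "$u$ many constraints" force "$\geq 2 \cdot (\text{something} > u)$ vectors of norm $m$" against the ceiling $C(2R,m)$ yields $u < B(R,m) = \tfrac12 C(2R,m)$.

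**The main obstacle** I expect is bookkeeping the exact normalization: making sure the element of $\co_H^\vee$ produced by the $(2i+1)$-st convergent really does give an element of $m\co_H^+$ after multiplying by $m$, that its representations correspond bijectively (or at least injectively, up to the factor $2$ accounting for $v \mapsto -v$) to norm-$m$ vectors in the rank-$2R$ trace lattice, and that the count of "forced" vectors is at least $2u$ (hence strictly more than $2 B(R,m) - 1$, forcing the strict inequality). This is precisely the place where \cite{KT} worked with universal lattices and the number $u_{2i+1}/2$; here I must redo it with $m\co_H$-universality and track the constant $m$ through, but no genuinely new idea is needed beyond Theorem \ref{thm:short vectors}. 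Everything else — positive definiteness of the trace form, integrality under the classical assumption, the recurrences for convergents — is routine and already available in the excerpt.
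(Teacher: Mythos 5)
Your proposal matches the paper's strategy: pass to the rank-$2R$ $\Z$-lattice $\bigl(\Z^{2R}, q\bigr)$ with $q(v) = \Tr_{H/\Q}\bigl(\delta\, Q(\varphi^{-1}(v))\bigr)$ for a totally positive $\delta \in \co_H^{\vee}$, then pit the number of vectors of norm $m$ that $m\co_H$-universality forces into existence against the ceiling $C(2R,m) = 2B(R,m)$ from Theorem \ref{thm:short vectors}. That is exactly the paper's proof and explains the factor $\tfrac12$ in the definition of $B(R,m)$.

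The one place where your sketch drifts from what actually works is the mechanism producing the $\geq 2(u+1)$ norm-$m$ vectors. You suggest representing $m\gamma^{-1}$ and counting ``one element per large odd-indexed coefficient,'' but in general there may be only one odd index $2i+1$ with $u_{2i+1}=u$, so that count does not scale with $u$. The paper instead fixes that single index and uses the $u+1$ \emph{semiconvergents} $B_r = \alpha_{2i-1} + r\alpha_{2i}$ with $0\le r\le u$, each totally positive, together with one $\delta \in \co_H^{\vee,+}$ satisfying $\Tr(\delta B_r)=1$ for every $r$ (this is the content cited from Kala--Tinková, Section~3). Then $Q(w_r)=mB_r$ gives $q(\pm v_r)=\Tr(m\delta B_r)=m$, and the $2(u+1)$ vectors $\pm v_r$ are distinct since the $mB_r$ are distinct. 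You rightly flag this as ``the place where KT worked with $u_{2i+1}/2$'' and that no new idea is needed, so I read this as a known gap you intended to fill from that reference rather than a wrong turn; just be aware that the crucial ingredient is the family of semiconvergents at a single index, not a tally over many odd indices.
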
	

\begin{proof} 
	Assume that $u=u_{2i+1}$ and consider the ``semiconvergents'' $B_r:=\alpha_{2i-1}+r\alpha_{2i}, 0\leq r\leq u_{2i+1}$.
	By \cite[Section 3]{KT}, each $B_r\succ 0$ and there is $\delta\in\co_H^{\vee, +}$ such that $\Tr(\delta B_r)=1$.
	
	By fixing a $\Z $-basis for $\co_H$, we can identify $\Lambda$ with a classical $\Z $-lattice of rank $2R$ via an isomorphism $\varphi:\Lambda\rightarrow \Z^{2R}$ and equip it with the quadratic form $q(v):=\Tr(\delta Q(\varphi^{-1}(v)))$. As $\delta\in\co_H^{\vee, +}$, the quadratic form $q$ on $\Z^{2R}$ is classical and positive definite.
	
	By our assumption, $Q$ represents all of $m\co_H^+$. In particular, there are vectors $w_r$ such that $Q(w_r)=mB_r$.
	For the corresponding vectors $v_r:=\varphi(w_r)$ we have $q(\pm v_r)=q(v_r)=\Tr(\delta Q(w_r))=\Tr(m\delta B_r)=m$.
	
	Thus the number $N(m)$ of vectors of norm $m$ in our lattice $(\Z^{2R},q)$ satisfies $N(m)\geq 2(u+1)>2u$.
	In order to conclude the proof, it is enough to combine this inequality with Theorem \ref{thm:short vectors}.
\end{proof}

Finally, we can prove the main results of the paper.

\begin{thm}\label{th:main}
	Let $R,m,X$ be positive integers. Denote
\begin{align*}
	&\mathcal{D}(R,m,X):=\\
	&\ \#\left\{ \text{squarefree }  D\leq X \mid \exists \text{ $m\co_{\Q(\sqrt{D})}$-universal classical lattice of rank } R  \right\}.
\end{align*}
	Let $B(R,m)$ be the quantity defined in Theorem $\ref{thm:short vectors}$. Then for all $X$ satisfying $X\geq B(R,m)^{12}(\log X)^{4}$ we have
	\begin{align*}
		\mathcal{D}(R,m,X)<100 B(R,m)^{3/2}\cdot X^{7/8}(\log X)^{3/2}.
	\end{align*}
\end{thm}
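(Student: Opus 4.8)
The plan is to combine the two main ingredients already established in the paper: the short-vector bound of Theorem~\ref{thm:short vectors} (via its consequence Theorem~\ref{thm:large ranks}) to control the continued fraction coefficients of $\xi_D$, and the counting result of Corollary~\ref{CorBound} to bound the number of $D$ for which these coefficients stay small.

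\textbf{Step 1: translating $m\co_H$-universality into bounded continued fraction coefficients.} Suppose $D\le X$ is squarefree and there exists an $m\co_{\Q(\sqrt D)}$-universal classical lattice of rank $R$. By Theorem~\ref{thm:large ranks}, writing $\xi_D=[u_0;\overline{u_1,\dots,u_s}]$ and $u=\max\{u_{2i+1}\mid i\ge 0\}$, we get $u< B(R,m)$. In particular, \emph{every} odd-indexed coefficient of the (purely periodic part, hence of the whole) continued fraction of $\xi_D$ is $\le B(R,m)$; equivalently $\xi_D\bmod 1=[0;u_1,u_2,\dots]$ with $u_{2n-1}\le B(R,m)$ for all $n\ge 1$. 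Thus
\begin{align*}
\mathcal{D}(R,m,X)\le \#\bigl\{\,1\le D\le X \ \big|\ \xi_D\bmod 1=[0;u_1,u_2,\dots],\ u_{2n-1}\le B(R,m)\ \text{for all }n\,\bigr\}.
\end{align*}
Here I should note that for squarefree $D>1$ the number $\xi_D$ is a quadratic irrational and $\xi_D\bmod 1\in[0,1)$ is irrational, so the continued fraction notation applies; and that the continued fraction of $\xi_D$ is eventually periodic, so ``$u_{2n-1}\le B(R,m)$ for all $n$'' is genuinely controlled by the finitely many coefficients $u_1,\dots,u_s$, which is exactly $u<B(R,m)$.

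\textbf{Step 2: applying the counting bound.} Set $B:=B(R,m)$. If $B\ge 2$ and $X\ge B^{12}(\log X)^4$, then Corollary~\ref{CorBound} applies verbatim to the right-hand side of the inequality above, giving
\begin{align*}
\mathcal{D}(R,m,X)< 100\, B^{3/2}(\log X)^{3/2} X^{7/8}=100\, B(R,m)^{3/2}\cdot X^{7/8}(\log X)^{3/2},
\end{align*}
which is the claimed bound. The only point to check is that $B(R,m)\ge 2$; since $B(R,m)=\tfrac12 C(2R,m)$ and $C(r,n)\ge 2r$ when $n=1$ (and is at least $480/2$ when $n=2$, and clearly $\ge 1$ in general with the volume-plus-boundary expression exceeding $2$ for $n\ge 3$ once $r\ge 1$), we have $C(2R,m)\ge 4$, hence $B(R,m)\ge 2$, for all $R\ge 1$ and $m\ge 1$. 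If one is worried about edge cases (e.g. $m\ge 3$ and $R=1$), one can observe that either $\mathcal{D}(R,m,X)=0$ trivially or the volume term forces $C(2R,m)$ to be large; in any case $B(R,m)\ge 2$ always holds, and the hypothesis $X\ge B(R,m)^{12}(\log X)^4$ is exactly the condition needed to invoke Corollary~\ref{CorBound}.

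\textbf{Main obstacle.} There is essentially no hard analytic content left at this stage — all the work has been done in Sections~\ref{sec:cont frac} and~\ref{sec:theta est}. The only thing requiring a little care is the bookkeeping in Step~1: making sure that the bound $u<B(R,m)$ from Theorem~\ref{thm:large ranks} really does imply that \emph{all} odd-indexed partial quotients of $\xi_D\bmod 1$ (not just those in one chosen period) are $\le B(R,m)$, which follows from periodicity of the continued fraction of $\xi_D$. Once that identification is made, the proof is just ``apply Theorem~\ref{thm:large ranks}, then apply Corollary~\ref{CorBound}.''
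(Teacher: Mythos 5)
Your proposal is correct and matches the paper's own proof exactly: apply Theorem~\ref{thm:large ranks} to get $u<B(R,m)$, then apply Corollary~\ref{CorBound}. The extra check that $B(R,m)\ge 2$ (so that Corollary~\ref{CorBound} is applicable) is a sensible piece of bookkeeping that the paper leaves implicit.
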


This establishes Theorem \ref{MAIN}
with $A(R,m):=100 B(R,m)^{3/2}$.

\begin{proof}
	Theorem \ref{thm:large ranks} implies that $u<B(R,m)$ and the result then follows from Corollary \ref{CorBound}.
\end{proof}

\begin{cor}\label{cor:main}
For every $\varepsilon >0$ and sufficiently large $X$, we have
\begin{align*}
&\#\left\{  \text{ squarefree }  D\leq X\ \bigg|\ R_{class}(\Q(\sqrt D))\leq D^{\frac{1}{12} -\varepsilon}  \right\} < 300X^{1-\frac{3}{2}\varepsilon}(\log X)^{3/2},\\
&\#\left\{  \text{ squarefree }  D\leq X\ \bigg|\ R(\Q(\sqrt D))\leq D^{\frac{1}{24} -\varepsilon}\right\}\ \ \ \ \  < 800X^{1-3\varepsilon}(\log X)^{3/2}.
\end{align*}
\end{cor}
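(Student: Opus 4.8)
\textbf{Proof plan for Corollary \ref{cor:main}.}
The plan is to deduce both bounds directly from Theorem \ref{th:main} by a careful choice of parameters. Set $R=R_{class}(\Q(\sqrt D))$ (resp. $R=R(\Q(\sqrt D))$ in the second case) and think of $B(R,m)$ as the quantity from Theorem \ref{thm:short vectors}. The key link is that if $R_{class}(\Q(\sqrt D))\le D^{\frac{1}{12}-\varepsilon}$, then there is a $1\co_{\Q(\sqrt D)}$-universal (so $m=1$) classical lattice of rank $R= \lfloor D^{\frac1{12}-\varepsilon}\rfloor$, hence $D$ is counted by $\mathcal D(R,1,X)$ for that particular $R$; for the non-classical rank we use that a non-classical universal lattice $(\Lambda,Q)$ gives a classical $2\co_H$-universal lattice $(\Lambda,2Q)$, so we work with $m=2$ and $R=\lfloor D^{\frac1{24}-\varepsilon}\rfloor$. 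First I would record, from the explicit formula for $C(r,n)$ (and $B(r,n)=\tfrac12 C(2r,n)$) with $n=m\in\{1,2\}$ fixed, a clean polynomial upper bound of the shape $B(R,m)\le c\, R^{A}$ for absolute constants $c,A$ — this is legitimate since the remark after Theorem \ref{thm:short vectors} notes the bounds are polynomial in $r$ for fixed $n$; in fact one can read off something like $B(R,m)\ll R^{m}$ crudely, which suffices because $m$ is a fixed small constant here.

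Next I would split the counted $D\le X$ into dyadic-type ranges according to the size of $R$. The subtlety is that the hypothesis $R\le D^{\frac1{12}-\varepsilon}$ ties $R$ to $D$, whereas Theorem \ref{th:main} is uniform in a \emph{fixed} $R$. So for each value $R=1,2,\dots,\lfloor X^{\frac1{12}-\varepsilon}\rfloor$ we apply Theorem \ref{th:main} to count squarefree $D\le X$ admitting an $m\co_{\Q(\sqrt D)}$-universal classical lattice of rank exactly $R$, obtaining at most $100\,B(R,m)^{3/2}X^{7/8}(\log X)^{3/2}$, provided the side condition $X\ge B(R,m)^{12}(\log X)^4$ holds. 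Using $B(R,m)\le c R^{A}$ with $R\le X^{1/12-\varepsilon}$, the side condition becomes $X\ge c^{12} X^{A(1-12\varepsilon)/12}(\log X)^4$, which is satisfied for all sufficiently large $X$ once $A(1-12\varepsilon)/12<1$; since $A$ is a small absolute constant ($A\le 2$ for $m\le 2$) this is automatic. Summing over $R$, the total is at most
\begin{align*}
\sum_{R\le X^{1/12-\varepsilon}} 100\,(cR^{A})^{3/2} X^{7/8}(\log X)^{3/2}
\ll X^{7/8}(\log X)^{3/2}\sum_{R\le X^{1/12-\varepsilon}} R^{3A/2}
\ll X^{7/8}(\log X)^{3/2}\, X^{(\frac1{12}-\varepsilon)(1+\frac{3A}{2})}.
\end{align*}
With $A\le 2$ the exponent of $X$ is at most $\tfrac78+(\tfrac1{12}-\varepsilon)\cdot 4=\tfrac78+\tfrac13-4\varepsilon=\tfrac{29}{24}-4\varepsilon$, which is of course larger than $1$ and so too weak; this tells me I must \emph{not} bound $R$ so crudely in the exponent.

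The fix — and the main obstacle to get the clean exponents $1-\frac32\varepsilon$ and $1-3\varepsilon$ claimed — is that one should not sum over all $R$ up to $X^{1/12-\varepsilon}$, but rather note that a given $D$ is counted only for the single value $R=R_{class}(\Q(\sqrt D))$ (at most, say, $R\le D^{1/12-\varepsilon}\le X^{1/12-\varepsilon}$), and one should instead partition the $D$'s themselves by dyadic ranges $X/2<D\le X$, apply the bound at level $X$ with the \emph{uniform} value $R=\lfloor X^{1/12-\varepsilon}\rfloor$ (valid since $R_{class}$ can only be smaller, and a lattice of rank $R'$ that is universal gives, after adding hyperbolic or diagonal summands, a universal lattice of any rank $R\ge R'$ — or more simply, if $D$ is counted with some $R'\le R$ then it is counted by $\mathcal D(R,m,X)$ too, as $\mathcal D(R,m,X)$ is non-decreasing in $R$). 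Thus $\#\{$squarefree $D\le X: R_{class}(\Q(\sqrt D))\le D^{1/12-\varepsilon}\}\le \mathcal D(\lfloor X^{1/12-\varepsilon}\rfloor,1,X)$, and Theorem \ref{th:main} gives this is at most $100\,B(\lfloor X^{1/12-\varepsilon}\rfloor,1)^{3/2}X^{7/8}(\log X)^{3/2}$. Now $B(\lfloor X^{1/12-\varepsilon}\rfloor,1)\le c X^{A(1/12-\varepsilon)}$, so $B^{3/2}\le c^{3/2}X^{\frac{3A}{2}(1/12-\varepsilon)}$; to land at exponent $1-\frac32\varepsilon$ we need $\tfrac78+\tfrac{3A}{2}(\tfrac1{12}-\varepsilon)\le 1-\tfrac32\varepsilon$, i.e. roughly $\tfrac{3A}{2}\cdot\tfrac1{12}\le\tfrac18$ and $\tfrac{3A}{2}\varepsilon\ge\tfrac32\varepsilon$, forcing $A\le 1$. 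So the genuine technical heart is to show that for $m=1,2$ one has the linear-in-$R$ bound $B(R,m)\le CR$ (with an explicit $C$), which one extracts from the $n=1,2$ cases of Theorem \ref{thm:short vectors}: $C(r,1)=2r$ and $C(r,2)=\max\{480,2r(r-1)\}$ — wait, the latter is quadratic, so for $m=2$ one gets $B(R,2)=\tfrac12 C(2R,2)\asymp R^2$ and the second corollary's exponent $1-3\varepsilon$ comes out of $\tfrac78+\tfrac{3\cdot2}{2}(\tfrac1{24}-\varepsilon)=\tfrac78+\tfrac18-3\varepsilon=1-3\varepsilon$, which matches, and for $m=1$, $B(R,1)=\tfrac12 C(2R,1)=2R$ gives $\tfrac78+\tfrac32(\tfrac1{12}-\varepsilon)=\tfrac78+\tfrac18-\tfrac32\varepsilon=1-\tfrac32\varepsilon$, also matching. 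So the plan is exactly: (i) from Theorem \ref{thm:short vectors} read off $B(R,1)=2R$ and $B(R,2)\le 4R^2$ (using $2(2R)(2R-1)<8R^2$, or $\le \max\{480,\dots\}$ absorbed into a constant); (ii) apply Theorem \ref{th:main} with $R=\lfloor X^{1/12-\varepsilon}\rfloor$, $m=1$ and with $R=\lfloor X^{1/24-\varepsilon}\rfloor$, $m=2$, after checking the side condition $X\ge B(R,m)^{12}(\log X)^4$ holds for large $X$ (since $B(R,m)^{12}\le (4R^2)^{12}\le 4^{12}X^{(1/24-\varepsilon)\cdot 24}=4^{12}X^{1-24\varepsilon}$, comfortably $o(X/(\log X)^4)$); (iii) substitute the explicit $B$ into the bound $100\,B^{3/2}X^{7/8}(\log X)^{3/2}$, compute the resulting exponent of $X$, and verify the numerical constants $300$ and $800$ (from $100\cdot 2^{3/2}<300$ and $100\cdot 4^{3/2}=800$), and finally translate "$R_{class}\le D^{1/12-\varepsilon}$" into being counted by $\mathcal D(\lfloor X^{1/12-\varepsilon}\rfloor,1,X)$ using monotonicity of $\mathcal D$ in $R$ together with $D\le X$. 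The one genuinely delicate point, which I would spell out, is this monotonicity/uniformization step — that it is legitimate to replace the $D$-dependent rank bound by the single largest rank $\lfloor X^{1/12-\varepsilon}\rfloor$ — since everything else is bookkeeping with the explicit constants already furnished by Theorems \ref{thm:short vectors} and \ref{th:main}.
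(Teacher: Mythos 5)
Your final plan (after the dyadic-sum false start, which you correctly abandon) is essentially the proof in the paper: apply Theorem~\ref{th:main} with $R=\lfloor X^{1/12-\varepsilon}\rfloor,\ m=1$ and with $R=\lfloor X^{1/24-\varepsilon}\rfloor,\ m=2$, read off $B(R,1)=2R$ and $B(R,2)<4R^2$ from Theorem~\ref{thm:short vectors}, verify the side condition, and compute $100\cdot2^{3/2}<300$ and $100\cdot 4^{3/2}=800$; you also correctly spell out the monotonicity-of-$\mathcal{D}$-in-$R$ step that the paper uses implicitly. One small slip: padding by a \emph{hyperbolic} summand would destroy total positive definiteness — only padding by diagonal totally positive summands like $\langle 1\rangle$ is valid — but you also offer the correct version, so the argument stands.
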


This corollary obviously implies Theorem \ref{th1.1}.

\begin{proof}
For the first estimate, take $X, R$ such that 
$R=X^{1/12-\varepsilon}$.
By Theorem \ref{th:main}, if $X$ is sufficiently large (specifically, $X^{12\varepsilon}\geq (\log X)^{4}$), then the number of squarefree $D\leq X$ that admit a universal lattice of rank at most $R=X^{1/12-\varepsilon}\geq D^{1/12-\varepsilon}$ is less than $100\cdot 2^{3/2}X^{1-\frac{3}{2}\varepsilon}(\log X)^{3/2} < 300 X^{1-\frac{3}{2}\varepsilon}(\log X)^{3/2}$, as we want.

For the second estimate, take $X, R$ such that 
$R^2=X^{1/12-2\varepsilon}$.
Now if there is a (possibly) \textit{non-classical} universal lattice $(\Lambda, Q)$ over $\Q(\sqrt D)$, then $(\Lambda, 2Q)$ is a classical $2\co_{\Q(\sqrt{D})}$-universal lattice of the same rank. The result again immediately follows from the preceding theorem.
\end{proof}

\end{document}